\documentclass[12pt, reqno]{amsart}

\usepackage{amsfonts,amssymb}
\usepackage{hyperref, graphicx}
\usepackage{epsfig}
\usepackage{latexsym}
\usepackage{amsmath,amsthm,calligra,mathrsfs}
\usepackage{mathrsfs}
\usepackage[all,cmtip]{xy}
\usepackage{comment}
\usepackage[dvipsnames]{xcolor}
\usepackage{tikz-cd}
\usepackage{mathrsfs}
\usepackage{bbm}
\theoremstyle{plain}
\newtheorem{theorem}{Theorem}[section]
\newtheorem{lemma}[theorem]{Lemma}
\newtheorem{definition}[theorem]{Definition}
\newtheorem{proposition}[theorem]{Proposition}
\newtheorem{cor}[theorem]{Corollary}
\newtheorem{remark}[theorem]{Remark}
\newtheorem{example}[theorem]{Example}

\numberwithin{equation}{section}

\def \d{\mathbf{d}}
\def \sd{\mathbf{d^s}}
\def \sdp{\mathbf{d}'^{\mathbf{s}}}
\def \N{\mathbb{N}}
\def \Z{\mathbb{Z}}

\def \ra{\rightarrow}
\def \bS{\mathbb{S}}
\def \bfR{\mathbf{Rep}}
\def \bfSR{\mathbf{SR}}
\def \btheta{\boldsymbol{\theta}}
\def \bbSR{\mathbb{S}\mathbf{R}}
\def \G{\mathbb{G}}
\def \X{\mathbb{X}}

\newcommand{\Hom}{\operatorname{Hom}\nolimits}
\newcommand{\Sym}{\operatorname{Sym}\nolimits}
\newcommand{\sheaf}[1]{\mathcal{#1}}
\newcommand{\sdim}{\mathrm{sdim}}
\newcommand{\salg} {\textbf{ k-salg }}
\newcommand{\grps} {\textbf{Grps}}
\newcommand{\smod} {k\text{-}\textbf{smod }}
\newcommand\spec{{\mathrm{Spec}}}
\newcommand{\uHom}{\mathrm{\underline{Hom}}}

\newcommand{\bp} {\mathbf {p}}
\newcommand{\bA} {\mathbb{A}}
\newcommand{\rM}{\mathrm{M}}
\newcommand{\GL}[1][]{\mathrm{GL_{p|q}}{#1}}
\newcommand{\GLV}[1][]{\mathrm{GL_{V}}{#1}}
\newcommand{\Gm}{\mathbb{G}_m}
\newcommand{\mc}[1]{\mathcal{#1}}
\newcommand{\stsheaf}[1]{\mathcal{O}_{#1}}
\newcommand{\est}[1]{\mathcal{O}_{{#1}_0}}
\newcommand{\ost}[1]{\mathcal{O}_{{#1}_1}}
\newcommand{\Spec}[1]{\mathbb{S}\mathrm{pec}(#1)}
\newcommand{\Proj}[1]{\mathbb{P}\mathrm{roj}\left(#1\right)}
\newcommand{\smfss}{\mathfrak{M}_{\sd}^{\btheta\text{-}\mathrm{ss}}}
\newcommand{\smfs}{\mathfrak{M}_{\sd}^{\btheta\text{-}\mathrm{st}}}
\newcommand{\smss}{\mathsf{M}_{\sd}^{\btheta\text{-}\mathrm{ss}}}
\newcommand{\sms}{\mathsf{M}_{\sd}^{\btheta\text{-}\mathrm{st}}}
\newcommand{\SSch}{\mathbf{SSch}}
\newcommand{\Sch}{\mathbf{Sch}}
\newcommand{\Set}{\mathbf{Set}}
\newcommand{\smf}{\mathfrak{M}_{\sd}}
\newcommand{\Ms}{\mathcal{M}}
\newcommand{\fp}[1]{\underline{#1}}
\title{Moduli of super-representations of quivers}
\author{Sanjay~Amrutiya}
\address{Department of Mathematics, IIT Gandhinagar,
 Near Village Palaj, Gandhinagar - 382355, India}
 \email{samrutiya@iitgn.ac.in}
\author{Umesh ~V ~Dubey}
\address{Harish-Chandra Research Institute,
Prayagraj - 211019, Uttar Pradesh, India} 
\address{Homi Bhabha National Institute, Training School Complex, Anushakti Nagar, Mumbai 400 094, India}
 \email{umeshdubey@hri.res.in}
 \subjclass[2020]{Primary: 16G20, 14M30, 16W55, 14D25}
\keywords{Super geometric quotient; quiver representation; Super varieties.}
\date{}

\begin{document}
\maketitle
\begin{abstract}
 In this note, we construct moduli spaces of super-representations of quivers by extending King’s Geometric Invariant Theory (GIT) framework to the super setting. 
 Using the even part of super general linear groups and a parity-shifting operator, we construct moduli spaces that topologically parameterize super-representations of a fixed super-dimension. 

 We also outline the construction of super quiver varieties for super-representations of framed quivers (without doubling); and realise super-Grassmannians and super-flag varieties as geometric quotients within this setting.
\end{abstract}
\section{Introduction}
A. King \cite{Ki94} introduced geometric methods to study the 
representations of quivers, using Mumford's GIT. Since then, 
the quiver varieties and their applications have been extensively 
explored by several mathematicians. Recently, Bovdi and Zubkov
\cite{BZ20} have initiated the study of super-representations 
of quiver and their semi-invariants with respect to the super 
general linear groups. This motivated us to study super-representations of quivers using geometric methods.

In this article, we have begun to study the geometric 
quotients of super vector spaces by the action of standard 
linearly reductive groups. Note that there are very few  
actual linearly reductive super-groups. In fact, the super 
general linear groups are not linearly reductive, in 
general \cite{S21}. 

Let us briefly outline the content of the article. In Section 2, we recall various fundamental concepts from super algebraic geometry. Section 3 addresses the action of a standard linearly reductive group on an affine superscheme and describes the construction of super GIT quotients. In Section 4, we first review some basics of super representations of quivers and then explore the algebraic notion of stability for super representations, relating it to the usual representations of the associated quiver. In Section 5, we consider the super representation space with the action of the even part of the super general linear groups to construct a moduli space of super representations of a quiver with a fixed super dimension. To topologically parameterize super representations with a fixed super dimension, we need to double the super representation space using a parity-shifting operator. Finally, in the last section, we discuss the super GIT quotient (without doubling) to construct super quiver varieties, which parametrize the super representations of quivers within the category of super vector spaces. As a special case, we can obtain super projective spaces, super Grassmannians, and super flag varieties.

\section{Preliminaries}
In this section, we briefly review some basic concepts in algebraic super geometry. For more details see \cite{BR21, BRP, CN, CCF, Ma88a, MZ22, OV23, We09}.

\subsection{Super spaces and bosonic reduction}
A super space $X$ is a locally super ringed space $(|X|, \stsheaf{X})$, 
i.e., a pair consisting of an underlying  topological space $|X|$ and a structure 
sheaf 
$$\stsheaf{X} = \est{X} \oplus \ost{X}$$
of supercommutative rings such that the stalks are local 
supercommutative rings. The component
$\est{X}$ (respectively, $\ost{X}$) is called the sheaf 
of even (resp. odd) functions.

Let $$\mc{J} = \ost{X} \oplus \ost{X}^2$$ for
the ideal in the structure sheaf $\stsheaf{X}$ generated by 
$\ost{X}$. The ringed space 
$$X_{\mathrm{bos}} := (|X|, \stsheaf{X}/\mc{J}),$$
which is a purely even subspace of $X$, is called the bosonic 
reduction of $X$. 
There is a canonical closed embedding $X_{\mathrm{bos}} \hookrightarrow X$ induced by this quotient map of structure sheaves. It is a final object among all locally (even)  ringed spaces mapping (degree preserving) to $X$. 

If the closed immersion 
induced by $\stsheaf{X}\rightarrow \stsheaf{X}/\mathcal{J}$ has 
a retraction $\pi : X \rightarrow X_\mathrm{bos}$, then we say 
that $X$ is projected.

The reduced ringed space associated with the super space 
$(|X|, \stsheaf{X})$ s defined as the pair: 
$$X_{\mathrm{red}} := (|X|, \stsheaf{X}/\mc{N}),$$
where $\mc{N}$ is the nilradical ideal sheaf of $\stsheaf{X}$. In general if $X$ is a superscheme over a commutative ring $k$ and $2$ is not a zero-divisor in $k$, we have a closed immersion
$$X_{\mathrm{red}} \hookrightarrow X_{\mathrm{bos}}$$

\subsection{Affine super schemes}
\textit{We assume that all superalgebras are associative, super-commutative 
(i.e. $xy=(-1)^{p(x)p(y)} yx$) with unit and over a commutative ring $k$ and $2$ is not a zero-divisor in $k$.
We denote the category of such all superalgebras by} $\salg$.

For a supercommutative ring $A$, the standard construction of an affine scheme generalizes easily to give an affine superscheme 
$\Spec{A}$.

For a superalgebra $A$ let
$J_A$ denote the ideal generated by the odd elements i.e. $J_A= \langle A_1 \rangle_A$.
Denote the quotient $A/J_A$ by $A_\mathrm{bos}$.

Let $A$ be an object of $\salg$. We have that the underlying topological spaces 
$|\spec(A_0)|=|\spec(A_\mathrm{bos})|$, since the
algebras $A_\mathrm{bos}$ and $A_0$ differ only by nilpotent elements by our hypothesis. 

Let's consider $\mc{O}_{A_0}$ the structural sheaf of $\spec(A_0)$.
The stalk of the sheaf at the prime $\bp\in
\spec(A_0)$ is the localization of $A_0$ at $\bp$.
As for any superalgebra, $A$ is a
module over $A_0$. We have indeed a sheaf $\mc{O}_A$ of
$\mc{O}_{A_0}$-modules over $\spec A_0$ with stalk $A_{\bp}$, the localization
of the $A_0$-module $A$ over each prime
$\bp \in \spec(A_0)$.
$$
A_{\bp}=\{ {\frac{f}{g}} \quad | \quad f\in A, g \in A_0-\bp\}
$$
The localization $A_\bp$ has a unique two-sided maximal ideal 
which consists of
the maximal ideal in the local ring
$(A_\bp)_0$ and the generators of $(A_\bp)_1$ as $A_0$-module.
This uniqely determine the sheaf $\mc{O}_A$ of superalgebras on $\spec A_0$ such that for a basic open set
$$
U_f=\{ \bp \in \spec A_0 | (f) \not\subset \bp \}, \qquad f \in A_0
$$
we have $\mc{O}_A(U_f) := A_f=\{a/f^n \quad | \quad a \in A\}$.
The superspace $(|\spec A_0|,\mc{O}_A)$ is called \textit{affine superscheme}, and is denoted by $\Spec A$.

\begin{definition}
An \textit{ affine  algebraic supervariety} is a superspace
isomorphic to $\Spec A$ for some \textit{ affine}
superalgebra $A$ that is a finitely generated superalgebra such
that $A/J_A$ has no nilpotents.
We will call
$A$ the \textit{ coordinate ring} of the supervariety.
\end{definition}

\subsection{Super-scheme}
A \emph{superscheme} $X := (|X|, \mc{O}_X)$ is a 
superspace $(|X|, \mc{O}_X)$ such that $(|X|, \est{X})$ is a 
scheme (over $k$) and $\ost{X}$ is a quasi-coherent sheaf 
of $\est{X}$-modules. 

A \emph{morphism of superschemes} 
$f:X \to Y$ is a continuous map 
$f:|X| \to |Y|$ of the underlying topological spaces along with a morphism 
$f^{\#}: \mc{O}_Y \to f_* \mc{O}_X$ of sheaves of supercommutative 
$k$-algebras that is local, \emph{i.e}., induces a local morphism 
between the stalks.

A superspace $X$ is a superscheme if and only if it is locally
isomorphic to $\Spec A$ for some superalgebra $A$, i.e.
for all $x \in X$, there exists $U_x \subset X$ open such
that $(U_x, \stsheaf{X}|_{U_x}) \cong \Spec A$. 
Let $\SSch$ be the category of super-schemes.

\subsubsection{Affine Superspace}
Given a super vector space $V$ of dimension $m|n$ over the 
field $k$ of characteristic $\neq 2$, we define the corresponding \emph{affine superspace} as
\[
\mathbb{V} := \Spec{ \Sym (V^*)}.
\]
Recall that
$V^*$ is the set of linear maps $V \ra k$ not necessarily preserving
the parity, and $\Sym(V^*)=\Sym(V^*_0) \otimes \wedge V^*_1$,
where $\wedge V^*_1$ denotes the exterior algebra over the
ordinary vector space $V_1$. 
The standard \emph{affine $(m|n)$-superspace} is the affine 
superspace $\mathbb{A}^{m|n}$ associated with the vector 
superspace $V = k^{m|n}$. If we choose coordinates $x_1, \dots, 
x_m, \xi_1, \dots, \xi_n \in V^*$ on $V$, then $V$ may be 
identified with $k^{m|n}$ and
\[ \mathbb{V} \cong \mathbb{A}^{m|n} := \Spec{k[x_1, \cdots, x_m \; 
, \;  \xi_1, \cdots, \xi_n]}. \]

As a topological space
$\spec k[x_1 \dots x_m, \xi_1 \dots \xi_n]_0$ will consists of
the even maximal ideals
$$
(x_i-a_i, \xi_j\xi_k),
\qquad i=1 \dots m, \quad j,k=1 \dots n
$$
and the even prime ideals
$$
(p_1, \dots, p_r, \xi_j\xi_k),
\qquad i=1 \dots r, \quad j,k=1 \dots n
$$
where $(p_1, \dots, p_r)$ is a prime ideal in $k[x_1 \dots x_m]$.

The structural sheaf of $\bA^{m|n}$ will have stalk at the
point $\bp \in \spec k[\bA^{m|n}]_0$:
$$
k[\bA^{m|n}]_{\bp}=\{{\frac{f}{g}} \quad | \quad  f \in k[\bA^{m|n}],\quad
g\in k[\bA^{m|n}]_0, \quad  g\notin \bp\}.
$$

The bosonic reduction of $\mathbb{A}^{m|n}$ is the affine space $
\mathbb{A}^m$. The super affine space is split and may be regarded 
as a super ``ringification" of its bosonic reduction, the affine 
space $\mathbb{A}^m$, with a structure sheaf 
\[ \mc{O}_{\mathbb{A}^{m|n}} = \Sym_{\mc{O}_{\mathbb{A}^m}} (\Pi 
\mc{O}_{\mathbb{A}^m}^n),
\]
where $\Pi$ denotes the parity change operation, which shifts the 
grading by 1 modulo 2. The supersymmetric algebra of a purely odd 
linear object, in this case the sheaf 
$\Pi \mc{O}_{\mathbb{A}^m}^n$ of $\mc{O}_{\mathbb{A}^m}$-modules, 
is well-known as the exterior (Grassmann) algebra of the 
corresponding purely even object:
\[
\Sym (\Pi \mc{O}_{\mathbb{A}^m}^n) = 
\bigwedge(\mc{O}_{\mathbb{A}^m}^n).
\]

\begin{remark}
    If we take the supervector space $V \oplus \Pi V$ where $V$ has dimension $m|n$, then the bosonic reduction of the affine space associated to this supervector space will be $\mathbb{A}^m \times \mathbb{A}^n \cong \mathbb{A}^{m+n}$.
\end{remark}
\subsubsection{Projective Superspace}
Given a super vector space $V = V_0 \oplus V_1$ of dimension 
$(m+1)|n$, the $(m|n)$-dimensional super projective space 
$\mathbb{P}(V)$ may be defined as the superspace of lines, 
\emph{i.e}., $(1|0)$-dimensional vector subspaces of $V$. More 
technically, $\mathbb{P}(V)$ is the superscheme representing the 
functor of points that assigns a superscheme $T$ the set of 
\emph{locally supplemented} $(1|0)$-line subbundles is a locally free subsheaf of the trivial super vector bundle $T \times V$ 
over $T$. These are linear duals $\mc{L}^\vee$ of locally free, 
rank-$(1|0)$ quotient sheaves $\mc{L}$ of $\mc{O}_T \otimes 
V^\vee$ on $T$. One may also think of $\mathbb{P}(V)$ as the 
quotient
\[
\mathbb{P}(V) = (\mathbb{V}\setminus \{0\})/\mathbb{G}_m
\]
of the super affine space $\mathbb{V}$ with deleted origin by the 
\emph{multiplicative group} $\mathbb{G}_m = \mathrm{GL}(1)$, 
acting on $\mathbb{V}\setminus \{0\}$ by dilations. Finally, the 
construction of a projective spectrum generalizes to the super 
case, and one may identify
\[ \mathbb{P}(V) := \Proj{\Sym(V^\vee)}, \]
where the algebra $\Sym(V^\vee)$ is 
$\mathbb{Z} \oplus \mathbb{Z}_2$-graded with the $\mathbb{Z}$-
grading coming from the symmetric powers and the $\mathbb{Z}_2$-
grading coming from the $\mathbb{Z}_2$-grading on $V^\vee$. 
It turns out the super projective space over $k$ is isomorphic to 
a split superscheme defined by the ordinary projective space $
\mathbb{P}(V_0)$ and structure sheaf
\[ \mc{O}_{\mathbb{P}(V)} \cong \Sym((V_1)^\vee \otimes 
\mathcal{O}_{\mathbb{P}(V_0)}(-1))\;. \]
If $V = k^{m+1|n}$ with coordinates $x_0, \dots, x_m \, | \, 
\xi_1, \dots, \xi_n$, then 
\[ \mathbb{P}^{m|n} = \Proj{k[x_0, \cdots, x_m \, | \,  \xi_1, 
\cdots, \xi_n]}, \]
where all the generators have degree one in the $\mathbb{Z}$-
grading.

\subsection{Functor point of view}

Let $X$ be a superscheme.
We will denote by $h_X$ the \textit{representable functor} i.e. functor naturally isomorphic to the functor 
$$ 
\Hom(\underline{\hspace{0.4cm}}\,, X): \SSch^\text{op} \to \mathbf{Set}; S \mapsto X(S) := \Hom(S, X).
$$ 
If $S := \Spec{A}$ then the $A$-valued points of $X$ is defined as $X(A) := h_X(\Spec{A})$. 
Since superscheme has open cover by affine superschemes so it is enough to study the restriction of this functor to $\salg$ via the functor $\Spec{\underline{\hspace{0.4cm}}}$ i.e.
$$h_X(\underline{\hspace{0.4cm}}) := \Hom(\Spec{\underline{\hspace{0.4cm}}}, X): \salg^ \text{op} \to \mathbf{Set}; A \mapsto X(A)\,.$$
Furthermore, this restricted functor is representable if $X$ is an affine superscheme. 

Let $\iota \colon \Sch \ra \SSch$ be the canonical functor mapping an ordinary scheme to a superscheme with zero odd structure sheaf. From the definition of the bosonic reduction and the inclusion canonical functor $\iota$, we have

\begin{proposition}\label{prop-bosonic-reduction}
Let $X$ be a superscheme. Then, there exists a natural isomorphism of functors from $\SSch^\text{op}$ to $\Set$:
$$
\Hom_{\SSch}(\iota(\underline{\hspace{0.4cm}}),\, X) \cong
\Hom_{\Sch}(\underline{\hspace{0.4cm}},\, X_\mathrm{bos})
$$
\end{proposition}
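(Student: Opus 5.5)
The plan is to show that $X \mapsto X_{\mathrm{bos}}$ is right adjoint to $\iota$, with the canonical closed embedding $j\colon X_{\mathrm{bos}} \hookrightarrow X$ as the counit. (The functors in the statement are really functors of an ordinary scheme $S$, since $\iota$ is applied to the argument.) Concretely, for an ordinary scheme $S$ define
$$\Phi_S\colon \Hom_{\Sch}(S, X_{\mathrm{bos}}) \to \Hom_{\SSch}(\iota(S), X), \qquad g \mapsto j\circ \iota(g).$$
Naturality in $S$ is immediate, because $\Phi_S$ is post-composition with the fixed morphism $j$. What remains is to show that each $\Phi_S$ is bijective.

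For surjectivity, let $f\colon \iota(S)\to X$ be a morphism of superschemes. Its sheaf map $f^\#\colon \mc{O}_X \to f_*\mc{O}_S$ is parity preserving and $\mc{O}_S$ is purely even, so $f^\#$ sends $\ost{X}$ to $0$. It is also a ring map, so it kills the whole ideal $\mc{J}$ generated by $\ost{X}$. Hence $f^\#$ factors uniquely through $\mc{O}_X/\mc{J} = \mc{O}_{X_{\mathrm{bos}}}$. Keeping the same continuous map $|S|\to |X| = |X_{\mathrm{bos}}|$, this factorization defines $g\colon S\to X_{\mathrm{bos}}$ with $j\circ g = f$.

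The step needing care is checking that $g$ is a morphism of locally ringed spaces, i.e. that the stalk maps are local. For $s \in S$ with $x = f(s)$, the stalk of $\mc{O}_{X_{\mathrm{bos}}}$ at $x$ is $\mc{O}_{X,x}/\mc{J}_x$. Its maximal ideal is the image of $\mathfrak{m}_x$, since $\mc{J}_x \subseteq \mathfrak{m}_x$ (odd elements are nilpotent-ish and lie in every maximal ideal of a local superring). The stalk map $g^\#_s$ sends this ideal into $\mathfrak{m}_s$ precisely because $f^\#_s$ sends $\mathfrak{m}_x$ into $\mathfrak{m}_s$, so $g^\#_s$ is local. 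In the scheme setting one should also confirm that $X_{\mathrm{bos}}$ is an honest scheme. Locally $X = \Spec{A}$ and $X_{\mathrm{bos}} = \spec(A/J_A)$, using that $|\spec A_0| = |\spec A_{\mathrm{bos}}|$ as recalled above, so this holds.

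For injectivity, note that $j$ is the identity on topological spaces and $j^\#$ is the surjection $\mc{O}_X \to \mc{O}_X/\mc{J}$. So $j$ is a monomorphism: $j\circ g = j\circ g'$ forces equal continuous maps and $g^\# \circ q = g'^\# \circ q$ with $q$ surjective, hence $g = g'$. This gives the bijection $\Phi_S$ and so the natural isomorphism. I expect the only subtle point to be the locality and scheme-structure check above; the rest is formal, essentially a restatement of the universal property of $X_{\mathrm{bos}}$ quoted earlier.
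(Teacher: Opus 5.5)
Your proposal is correct and matches the paper's approach. The paper gives no separate proof; it deduces the statement directly from the universal property of $X_{\mathrm{bos}}$, namely that $X_{\mathrm{bos}} \hookrightarrow X$ is final among (even) locally ringed spaces mapping to $X$. Your argument verifies exactly that universal property: you factor $f^{\#}$ through $\mc{O}_X/\mc{J}$ because odd sections must map to $0$ in the purely even $\mc{O}_S$, you check locality on stalks, and you use that $j$ is a monomorphism. In this way you supply the details the paper leaves implicit.
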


\subsubsection{Affine super spaces: }
Let $A \in \salg$ and let $V=V_0 \oplus V_1$ be a
free supermodule (over $k$). Let $\smod$ denote
the category of $k$-super vector spaces.

Define $F_V: \salg \to \mathbf{Set}; A \mapsto F_V(A)$ where 
$$
F_V(A) = (A \otimes V)_0 = A_0 \otimes V_0 \oplus A_1 \otimes V_1.
$$
In general this functor is not representable. However, if $V$ is finite
dimensional we have:
$$
(A \otimes V)_0 \cong 
\Hom_{\smod}(V^*,A) \cong \Hom_{\salg}(\Sym(V^*),A) \cong 
h_{\Spec{\Sym(V^*)}}(A) = h_{\mathbb V}(A)
$$
where $\Sym(V^*)$ denotes the symmetric algebra over the dual space $V^*$.
In particular, $\Spec{\Sym(V^*)}(k) = V_0$.

In particular, if we fix a basis for a finite dimensional super vector space $V$ of dimension $m|n$.
The functor $F_V$ is represented by 
$k[x_1 \dots x_m, \xi_1 \dots \xi_n]$
where $x_i$ and $\xi_j$ are respectively even and odd indeterminates.

\subsubsection{Supermatrices: }
Let $A \in \salg$.
Define $\rM_{m|n}(A)$ as the set of endomorphisms of the $A$-supermodule
$A^{m|n}$. Choosing coordinates we can write:
$$
\rM_{m|n}(A)=
\left\{ \begin{pmatrix} a & \alpha \\ \beta & b \end{pmatrix} 
\right\}
$$
where $a$ and $b$ are $m \times m$, $n \times n$ blocks of even elements
and $\alpha$, $\beta$ are $m\times n$, $n \times m$ blocks of odd elements.

This is the functor of points
of an affine supervariety represented by the commutative
superalgebra: $k[\rM(m|n)]=$ $k[x_{ij},\xi_{kl}]$
where $x_{ij}$'s
and $\xi_{kl}$'s are respectively even and odd variables
with
$1 \leq i,j \leq m$ or  $m+1 \leq i,j \leq m+n$,
$1 \leq k \leq m$, $m+1 \leq l \leq m+n$ or
$m+1 \leq k \leq m+n$, $1 \leq l \leq m$.

Notice that $\rM_{m|n} \cong h_{\bA^{m^2+n^2|2mn}}$.

\subsection{Representations and super group action}
Recall that any $g\in \GL[(A)]$ is of the form
\[
\begin{pmatrix}
a_{00} & a_{01} \\ a_{10} & a_{11} 
\end{pmatrix}
\]
where $a_{00}$ and $a_{11}$ are $p \times p$, $q \times q$ 
blocks of even elements and $a_{01}$, $a_{10}$ are $p\times q$,
$q \times p$ blocks of odd elements.
Consider the group functor 
$$
\GL: \salg \ra \grps, \quad A\mapsto \GL[(A)] \,;
$$

Let $V$ be a super vector space over $k$. The super vector space can be viewed as the functor that assigns to any commutative superalgebra $A$ the Grassmann envelop $V(A) = (V\otimes A)_0$. 
We define the group functor $\GLV : \salg \ra \grps$ that assigns to each commutative superalgebra $A$, the even invertible elements of $\mathrm{End}_A(V \otimes A)$. If $V$ is finite-dimensional 
and if we fix a basis of $V$ , we have an isomorphism 
$V \simeq k^{p|q}$ for some $p$ and $q$, which induces an isomorphism of group functors $\GLV \simeq \GL$. Therefore, 
$\GLV$ is an affine algebraic supergroup for finite-dimensional 
$V$. The action of $\GLV[(A)]$ on $(V \otimes A)_0$ is same as 
a natural transformation $\GLV \times V \ra V $.

Let $G$ be a group functor and $V$ a super vector space, viewed as a functor. A linear representation of $G$ in $V$ is a morphism of group functors $G \ra \GLV $.

\begin{theorem}\cite{We09}
Let $G$ be an affine supergroup scheme with representing super Hopf algebra $k[G]$. Then a linear representation 
$\Phi\colon G \ra \GLV$ on $V$ corresponds to a unique $k$-linear map 
$\rho\colon V \ra V \otimes_k k[G]$ of super vector spaces such that the following diagrams commute:
\[
\xymatrix@C=3.0pc @R=3.0pc{
V \ar[r]^\rho \ar[d]_\rho & V\otimes_k k[G] \ar[d]^{1\otimes \Delta} \\
V\otimes_k k[G] \ar[r]_-{\rho\otimes 1} & V\otimes_k k[G]\otimes_k k[G]
}
~~~~~~~~~
\xymatrix@C=3.5pc{
V \ar[r]^\rho \ar[rd]_\simeq &
V\otimes_k k[G] \ar[d]^{1\otimes \epsilon} \\
& V\otimes_k k
}
\]
\end{theorem}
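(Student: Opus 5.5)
The plan is to go through the functor of points and Yoneda's lemma, exactly as in the classical correspondence between comodules and representations of affine group schemes.

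\textbf{From $\Phi$ to $\rho$.} A morphism of functors $\Phi\colon G\to \GLV$ gives, for every $A\in\salg$, a map $\Phi_A\colon G(A)=\Hom_{\salg}(k[G],A)\to \GLV(A)$, and these maps are natural in $A$. Take $A=k[G]$ and the universal point $\mathrm{id}_{k[G]}\in G(k[G])$. Then $\Phi_{k[G]}(\mathrm{id})$ is an even $k[G]$-linear automorphism of $V\otimes k[G]$. Restricting it to $V\otimes 1$ gives an even $k$-linear map
\[
\rho\colon V\to V\otimes_k k[G].
\]
By naturality, $\Phi_A(g)$ for an arbitrary point $g\colon k[G]\to A$ is recovered as the $A$-linear extension of $(1\otimes g)\circ\rho$. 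Hence $\rho$ determines $\Phi$, which gives uniqueness. The sign conventions need care here: $V\otimes A$ is a right $A$-supermodule and $g$ is an even algebra map, so no extra signs arise.

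\textbf{From $\rho$ to $\Phi$.} Conversely, given an even $\rho$, define $\Phi_A(g)$ as the $A$-linear extension of $(1\otimes g)\circ\rho$. This is evidently natural in $A$ and even. We then translate the group axioms using the Hopf structure. Multiplication in $G(A)$ is convolution: $gh=m_A\circ(g\otimes h)\circ\Delta$, with the super sign rule built into $g\otimes h$. So $\Phi_A(gh)=\Phi_A(g)\Phi_A(h)$ for all $A,g,h$ is equivalent, by Yoneda applied to $A=k[G]\otimes k[G]$ with the universal pair of points $g=\mathrm{id}\otimes 1$ and $h=1\otimes\mathrm{id}$, to the coassociativity square $(\rho\otimes1)\rho=(1\otimes\Delta)\rho$. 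Likewise $\Phi(e)=\mathrm{id}$, where $e=\epsilon$, is equivalent to the counit triangle. Invertibility of $\Phi_A(g)$ is then automatic: it has inverse $\Phi_A(g^{-1})$, where $g^{-1}=g\circ S$ with $S$ the antipode.

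\textbf{Main obstacle.} The delicate step is the multiplicativity computation. One must check that the order of composition (the right versus left action convention) and the Koszul signs coming from odd elements of $A$ match the coassociativity diagram exactly as drawn. First I would verify it on homogeneous $v$, writing $\rho(v)=\sum v_i\otimes f_i$. Then I would compute both sides of $\Phi(gh)v$ explicitly, using that $g$ and $h$ are even maps so that parities are preserved. Finally, the two constructions are mutually inverse by the recovery formula from the first step, which establishes the bijection.
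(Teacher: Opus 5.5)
Your proof is correct and is the standard one. The paper gives no argument of its own (it simply cites \cite{We09}), and your Yoneda/universal-point proof is exactly the classical comodule--representation correspondence carried over to the super setting: you read off $\rho$ from $\Phi_{k[G]}(\mathrm{id})$, and you test multiplicativity on the universal pair $f\mapsto f\otimes 1$, $f\mapsto 1\otimes f$ in $G(k[G]\otimes k[G])$, whose product is $\Delta$.

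The sign check you flag does go through with no Koszul signs. This is because $g,h$ are even, $V\otimes A$ is used as a right $A$-module, and $m_A\circ(g\otimes h)$ is an algebra map precisely because $A$ is supercommutative.
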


Let $\Gm = \mathrm{GL}_{1|0}$. Given a super vector space $V$ and
an integer $m\in \Z$, consider the map
$$
V \ra V\otimes_k k[t, t^{-1}]; \quad \quad v\mapsto v\otimes t^m \;.
$$
This defines a representation of $\Gm$, called a representation of \emph{weight} $m$. 

\begin{proposition}\label{prop-Gm}
    Every super representation $V$ of $\Gm$ is a direct sum $V = \ \bigoplus_{m\in \Z} V_{(m)}$, where each $V_{(m)}\subseteq V$
    is a sub-representation of weight $m$.
\end{proposition}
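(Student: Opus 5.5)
By the comodule description of linear representations (the theorem of \cite{We09} recalled above), a super representation of $\Gm$ on $V$ is a $k$-linear map of super vector spaces
\[
\rho\colon V \ra V\otimes_k k[t,t^{-1}],
\]
satisfying the coassociativity and counit diagrams. Here $k[\Gm]=k[t,t^{-1}]$ is purely even, with $\Delta(t)=t\otimes t$ and $\epsilon(t)=1$. The plan is to copy the classical proof for diagonalizable groups and check that the $\Z_2$-grading is respected at each step.

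\textbf{Step 1: define the weight components.} Since $\{t^m\}_{m\in\Z}$ is a $k$-basis of $k[t,t^{-1}]$, every $v\in V$ has a unique expansion
\[
\rho(v)=\sum_{m} p_m(v)\otimes t^m,
\]
with only finitely many nonzero terms. This defines $k$-linear maps $p_m\colon V\ra V$. Each $t^m$ is even and $\rho$ preserves parity, so each $p_m$ preserves parity. Set $V_{(m)}:=p_m(V)$; it is therefore a super subspace of $V$.

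\textbf{Step 2: use the comodule axioms.} Coassociativity gives
\[
\sum_m \rho(p_m(v))\otimes t^m=\sum_m p_m(v)\otimes t^m\otimes t^m.
\]
Comparing coefficients of $t^n\otimes t^m$ yields $p_n p_m=\delta_{nm}p_m$. The counit axiom gives $\sum_m p_m(v)=v$.

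\textbf{Step 3: conclude.} The $p_m$ are orthogonal idempotents summing to the identity, with locally finitely many nonzero on each vector. Hence $V=\bigoplus_m V_{(m)}$ as super vector spaces. For $w=p_m(v)$ we get $\rho(w)=\sum_n p_n p_m(v)\otimes t^n=w\otimes t^m$. So $V_{(m)}$ is exactly $\{w:\rho(w)=w\otimes t^m\}$, it is $\rho$-stable, and the coaction on it is the weight $m$ representation.

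The main obstacle is conceptual rather than computational. It lies in justifying that a representation in the paper's functorial sense (a morphism $\Gm\ra\GLV$ on all superalgebras $A$) is fully encoded by $\rho$. This is exactly what the cited theorem provides. The only genuinely super point is that $k[\Gm]$ is purely even, so parity compatibility of the $p_m$ is automatic.

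If $V$ is infinite-dimensional, the argument still works, because finiteness of the sum holds vectorwise.
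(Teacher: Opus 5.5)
Your proof is correct and is essentially the paper's argument. The paper also uses that $k[t,t^{-1}]$ is purely even, so the coaction preserves parity and restricts to $V_0$ and $V_1$; it then cites the classical weight decomposition for each piece. You instead run the classical idempotent argument on all of $V$ and check that the $p_m$ are even, which gives the same decomposition $V_{(m)}=(V_0)_{(m)}\oplus (V_1)_{(m)}$ and makes the cited classical step explicit.
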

\begin{proof}
Since graded $k$-linear morphism $V \to V \otimes_k k[t, t^{-1}]$ restricts to $V_0$
and $V_1$, and hence the assertion follows from the classical even case.
\end{proof}
\subsection{Invariant superspace}
Let $\G$ be a 
group superscheme, and $\X$ be a superscheme. We say that $\G$ acts on $\X$ if there is a functor morphism 
$\mu \colon \G \times \X \ra \X$ with the following properties: 
For $R\in \mathbf{SRings},\; x\in \X(R), \; g,h\in \G(R)$, we have
\begin{enumerate}
\item $\mu_R(e, x) = x$
\item $\mu_R(g, \mu_R(h, x)) = \mu_R(gh, x)$.
\end{enumerate}

Suppose an affine supergroup scheme $G$ is acting on an affine super scheme $X = \Spec A$, where $A$ is a super $k$-algebra.
Therefore, there is a morphism of super $k$-algebras 
$\rho\colon A\ra A\otimes_k k[G]$ such that the following condition 
holds:
\begin{enumerate}
    \item The composition $A \stackrel{\rho}{\ra} A\otimes k[G] \stackrel{1\otimes \varepsilon}{\ra} A\otimes_k k \stackrel{\simeq}{\ra} A$ is equal to the identity.
    \item The following diagram commutes:
    \[
    \xymatrix@C=3.0pc @R=3.0pc{
A \ar[r]^\rho \ar[d]_\rho & A\otimes_k k[G] \ar[d]^{\rho\otimes 1} \\
A\otimes_k k[G] \ar[r]_-{1\otimes \mu} & A\otimes_k k[G]\otimes_k k[G]
}
    \]
    where $\mu$ is co-multiplication on $k[G]$.
\end{enumerate}

The subset of $G$-invariants 
$$
A^G := \{f\in A\;|\; \rho(f) = f\otimes 1\}
$$
is a super subring of $A$. The affine scheme $\Spec{A^G}$ is called the invariant superspace.

\section{Super GIT}

\subsection{Affine super GIT}
Let $G$ be a linear reductive group scheme acting on an affine super scheme $X = \Spec A$, where $A$ is a super $k$-algebra. Then the affine super geometric invariant quotient (GIT) is defined as the affine super scheme $X / \! \!/ G:= \Spec{A^G}$. It comes with a canonical map $\pi: \Spec{A} \to \Spec{A^G}$ induced by the canonical inclusion of super algebras $i: A^G \hookrightarrow A$.

\begin{proposition}\label{prop:affine_GIT}
    The canonical map $\pi: X \to X / \! \!/ G$ is a $G$-invariant morphism. If $\phi:X \to Z$ is a $G$-invariant morphism from $X$ to any super scheme $Z$, then there exists a unique morphism $\psi: X / \! \!/ G \to Z$ such that $\phi = \psi \circ \pi$.
\end{proposition}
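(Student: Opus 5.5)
The plan is to reduce everything to algebra: first the affine case via the anti-equivalence between affine superschemes and superalgebras, then the general case by gluing over an affine cover of $Z$, with linear reductivity of $G$ supplying a Reynolds operator where needed.

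\emph{$G$-invariance of $\pi$.} Since $\pi$ comes from the inclusion $i\colon A^G\hookrightarrow A$, $G$-invariance means $\pi\circ\mu=\pi\circ\mathrm{pr}_2$ as maps $G\times X\to X/\!\!/G$. Dually this says $\rho\circ i=(i\otimes 1)\circ(-\otimes 1)$ as maps $A^G\to A\otimes_k k[G]$. That is exactly the defining condition $\rho(f)=f\otimes 1$ for $f\in A^G$. We also note that $A^G$ is a super subring because $\rho$ is a morphism of superalgebras preserving parity, so $A^G=A^G_0\oplus A^G_1$.

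\emph{Affine target.} Suppose first $Z=\Spec{B}$. Morphisms $X\to Z$ correspond to superalgebra maps $\phi^\#\colon B\to A$. $G$-invariance of $\phi$ translates to $\rho(\phi^\#(b))=\phi^\#(b)\otimes 1$ for all $b\in B$, i.e.\ $\phi^\#(B)\subseteq A^G$. Hence $\phi^\#$ factors uniquely through $i$ as a superalgebra map $\psi^\#\colon B\to A^G$. Uniqueness holds because $i$ is injective.

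\emph{General target.} This is the main obstacle. We must show that $\phi^{-1}(U)$ is a union of fibres of $\pi$ in a way compatible with $\pi$; precisely, for an affine open $U\subseteq Z$ we need an open $V\subseteq X/\!\!/G$ with $\pi^{-1}(V)=\phi^{-1}(U)$ and $\mathcal{O}(\pi^{-1}(V))^G=\mathcal{O}(V)$. Here I would use linear reductivity: $G$ is a classical linearly reductive group, so there is a Reynolds operator $R\colon A\to A^G$, an $A^G$-linear projection. It preserves parity since $\rho$ does, and it commutes with localisation at invariant even elements $f\in A^G_0$, giving $(A_f)^G=(A^G)_f$.

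Then I would argue on underlying spaces via the even part. Since $|X|=|\spec A_0|$ and $|X/\!\!/G|=|\spec A^G_0|$, the classical affine GIT for $G$ acting on $\spec A_0$ (with $(A_0)^G=A^G_0$) says $\pi$ is surjective and separates disjoint closed invariant sets. Consequently the invariant open $\phi^{-1}(U)$, which is saturated because $\phi$ is constant on the closures of orbits, is covered by basic opens $X_f$ with $f\in A^G_0$ mapping into $U$. On each $X_f=\Spec{A_f}$ the affine case gives a unique $\psi_f\colon \Spec{(A^G)_f}\to U$. These agree on overlaps $\Spec{(A^G)_{fg}}$ by uniqueness, and so they glue to $\psi$. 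Uniqueness of $\psi$ globally follows from the local uniqueness.

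If saturatedness is troublesome, an alternative is to apply Proposition \ref{prop-bosonic-reduction}-style reasoning only at the topological level. In that approach one first builds $|\psi|$ from the classical quotient $\spec A_0\to\spec A^G_0$, and then checks sheaf maps locally.
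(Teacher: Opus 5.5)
Your proposal is correct and follows the same three steps as the paper's proof: $G$-invariance of $\pi$ from $\rho(f)=f\otimes 1$ on $A^G$, the affine-target case by factoring $\phi^\#$ through the inclusion $A^G\hookrightarrow A$, and the general case by covering $Z$ with affine opens and gluing the local factorizations. In the last step you are more careful than the paper, which simply asserts that $\pi$ carries the $G$-stable opens $\phi^{-1}(W_i)$ onto an open affine cover of $X/\!\!/G$. Your covering of $\phi^{-1}(U)$ by invariant basic opens $X_f$ with $f\in A^G_0$, together with $(A_f)^G=(A^G)_f$ and the surjectivity of $\pi$, supplies exactly the justification that step needs. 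Surjectivity of $\pi$ is also what lets you compare $\psi_f$ and $\psi_g$ on $\Spec{(A^G)_{fg}}$ when they land in different affines.
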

\begin{proof}
    
Let $\mu: G \times X \to X$ denote the super-action morphism and $p_2: G \times X \to X$ denote the projection onto the second factor. The condition that $\pi$ is $G$-invariant translates to the identity $\pi \circ \mu = \pi \circ p_2$. 

Passing to the coordinate rings, we must verify that the following two superalgebra homomorphisms from $A^G$ to $A \otimes k[G]$ coincide:
\[ \hat{\mu} \circ i \quad \text{and} \quad \hat{p_2} \circ i \]
By definition, $\hat{\mu}$ is precisely the right super-coaction map $\rho$. For any homogeneous element $f \in A^G$, we have $\hat{\mu}(i(f)) = \rho(f) = f \otimes 1$. On the other hand, the pullback of the projection map acts as $\hat{p_2}(i(f)) = f \otimes 1$.

Since $\hat{\mu}(i(f)) = \hat{p_2}(i(f))$ for all elements in $A^G$, the superalgebra homomorphisms are identical. Thus, $\pi \circ \mu = \pi \circ p_2$, establishing $G$-invariance.

Next, we want to prove the universal property. Assume first that $Z$ is an affine superscheme, so $Z = \Spec{B}$
for some supercommutative algebra $B = B_0 \oplus B_1$. Let $\psi: X \to Z$ be a $G$-invariant morphism, which corresponds to an even superalgebra homomorphism $\hat{\psi}: B \to A$. 

The $G$-invariance of $\phi$ implies that $\hat{\mu} \circ \hat{\phi} = \hat{p_2} \circ \hat{\phi}$. Evaluating this on an arbitrary homogeneous element $b \in B$, we obtain:
\[ \rho(\hat{\phi}(b)) = \hat{\phi}(b) \otimes 1 \]
Therefore, $\hat{\phi}
(B) \subseteq A^{G}$. 

Consequently, $\hat{\phi}$ factors uniquely through the natural inclusion $i: A^G \hookrightarrow A$. This yields a unique even superalgebra homomorphism $\hat{\psi}: B \to A^{G}$ making the following diagram commute:
\[
\begin{tikzcd}
X \arrow[rr, "\phi"] \arrow[d, "\pi"'] & & Z
\\
X / \! \!/ G \arrow[rru, "\exists! \, \psi"'] & & 
\end{tikzcd}
\quad \quad \quad
\begin{tikzcd}
A & & B \arrow[ll, "\hat{\phi}"'] \arrow[lld, "\exists! \, \hat{\psi}"] \\
A^G \arrow[u, "i"] & & 
\end{tikzcd}
\]
Setting $\psi = \Spec{\hat{\psi}}$ satisfies the claim for the affine target case.

Now let $Z$ be a general, non-affine superscheme. We cover $Z$ by a family of open affine sub-superschemes $Z = \bigcup W_i$, where $W_i = \Spec{B_i}$. 

Since $\phi: X \to Z$ is $G$-invariant morphism, the preimages $U_i = \phi^{-1}(W_i)$ define $G$-stable open sub-superschemes of $X$.

Because $G$ is a reductive group, the sub-superalgebra of invariants $A^{G}$ is finitely generated (generalising the classical Hilbert-Nagata theorem via the exactness of the Reynolds operator on superrepresentations). The morphism $\pi$ maps $G$-stable open affine sub-superschemes $U_i$ directly to an open affine cover $\pi(U_i)$ of $X / \! \! / G$. 

Restricting to each component, the affine target case yields a unique morphism of superschemes $\psi_i: \pi(U_i) \to W_i \subseteq Z$ matching $\phi|_{U_i}$. By the uniqueness of the factorization on intersections $\pi(U_i) \cap \pi(U_j)$, these local morphisms glue together to give morphism of superschemes $\psi: X / \! \! / G \to Z$ satisfying $\phi = \psi \circ \pi$.
\end{proof}
\subsection{Stability and Semi-stability}
Let $G$ be a linearly reductive group acting on a super vector space $V = V_0 \oplus V_1$ with the kernel $\Delta$. Then, it induces an
action on the corresponding affine super scheme 
$\mathbb V = \Spec{\Sym(V^*)}$. Let $A:= \Sym(V^*)$ and $A_0:= \Sym(V_0^*)$. 

Note that a point $x\in \mathbb{V}(k)$ corresponds to a map $ev_x : A\ra k$, which factor through $A_0$.

Fix a character $\chi: G \to \mathbb{G}_m$ of a linearly reductive group $G$. Let $g_\chi \in k[G]$ be the group-like element representing the character $\chi$.

\begin{definition}\label{def:stab}
\begin{enumerate}
    \item[(i)] A point $x \in \mathbb{V}(k)$ is $\chi$-semistable if there is a relative invariant (or $\chi$-semi-invariant of weight $n$) $$f \in A^{G,\chi^{n}} := \{ f \in A | \rho(f) = f \otimes g_\chi^{n} \}$$  with $n \ge 1$, such that $f(x) \ne 0$.
Equivalently, a point $ev_x: A \to k$ (which factors through $A_0$ and hence a map $x: A_0 \to k$) is semistable if there is $f \in A^{G,\chi^{n}}$ for some $n \geq 1$, such that $ev_x(f) \ne 0$.

    \item[(ii)] A point $x \in \mathbb{V}(k)$ is $\chi$-stable if there is a relative invariant $f \in A^{G,\chi^{n}}$ with $n \ge 1$, such that $ev_x(f) \ne 0$ and, further, $\dim G \cdot x = \dim G/\Delta$ and the $G$-action on $\{x \in \mathbb{V}(k) \mid ev_x(f) \ne 0\}$ is closed.
\end{enumerate}
\end{definition} 

Consider $\tilde{A} := A[t]$ with the extended coaction $\tilde{\rho} : \tilde{A} \to \tilde{A} \otimes H$ defined by $\tilde{\rho}(a) = \rho(a)$ and $\tilde{\rho}(t) = t \otimes g_{\chi}^{-1}$. A lift of a point $x\in \mathbb{V}(k)$ is a point $\hat{x} = (x,z) \in \mathbb{V}(k) \times k$ corresponding to the algebra map $ev_{\hat{x}} : \tilde{A} \to k$, where $ev_{\hat{x}}(a) = ev_x(a)$ and $ev_{\hat{x}}(t) = z \neq 0$.

\begin{lemma}\label{lemma-ss-criterion}
Lift $x \in \mathbb{V}(k)$ to a point $\hat{x} = (x,z) \in \mathbb{V}(k) \times k$ with $z \ne 0$. Then
\begin{enumerate}
    \item[(i)] $x$ is $\chi$-semistable if and only if the orbit closure $\overline{G \cdot \hat{x}} \subseteq \mathbb{V}(k) \times k$ is disjoint from the zero-section $\mathbb{V}(k) \times \{0\}$. In particular, it is necessary that $\chi(\Delta)=\{1\}$.
    \item[(ii)] $x$ is $\chi$-stable if and only if $G \cdot \hat{x}$ is closed and the stabiliser of $\hat{x}$ contains $\Delta$ with finite index.
\end{enumerate}
\end{lemma}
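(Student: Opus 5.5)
The plan is to reduce everything to King's classical statement (\cite{Ki94}, Lemma 2.2) for the even affine space $\mathbb{V}_0 := \spec(A_0)$ with $A_0 = \Sym(V_0^*)$, on which $G$ acts linearly.

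\textbf{Step 1: the points only see $A_0$.} Points $x\in\mathbb{V}(k)$ are the same as points of $V_0$, and $ev_x$ factors through $A \to A/J_A = A_0$. Here $J_A$ is generated by $V_1^*$, and it is $G$-stable because $G$ acts through even (parity-preserving) maps. Since $G$ is linearly reductive, the surjection of $G$-modules $A \to A_0$ stays surjective on $\chi^n$-semi-invariants: $A^{G,\chi^n} \to A_0^{G,\chi^n}$ is onto. So for every $f \in A^{G,\chi^n}$ we get $ev_x(f) = ev_x(\bar f)$, and every $\bar f \in A_0^{G,\chi^n}$ is of this form. Hence $x$ is $\chi$-semistable in the sense of Definition~\ref{def:stab} iff it is $\chi$-semistable for the ordinary $G$-action on $V_0$. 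The same reduction works for stability: the open sets $\{ev_x(f)\neq 0\}$ agree, and the orbit and stabiliser conditions only concern $k$-points, i.e.\ $V_0$.

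\textbf{Step 2: the lift.} The lifted orbit $G\cdot\hat{x}$ and its closure lie in $\mathbb{V}(k)\times k = V_0\times k$, which is the $k$-point set of $\spec(A_0[t])$ with coaction $t\mapsto t\otimes g_\chi^{-1}$. This is exactly King's set-up. Both conditions (i) and (ii) are therefore statements about the classical linear action on $V_0\oplus k_{\chi^{-1}}$.

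\textbf{Step 3: King's argument, in outline.} For (i): the invariants of $A_0[t]$ are generated by elements $f t^n$ with $f \in A_0^{G,\chi^n}$. By linear reductivity, two disjoint closed $G$-stable subsets of an affine variety are separated by an invariant. Apply this to $\overline{G\hat{x}}$ and the zero-section $V_0 \times\{0\}$. This yields some $ft^n$, $n\ge1$, that vanishes on the zero-section and not at $\hat{x}$, so $f(x)\neq0$. The case $n=0$ is excluded because a nonzero constant would not vanish on the zero-section. Conversely, if $f(x)\neq0$ then $ft^n$ is a nonzero constant on the orbit closure but vanishes on the zero-section, so the two are disjoint. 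The necessity of $\chi(\Delta)=1$ follows because $\Delta$ acts on $t$ by $\chi^{-1}$: if $\chi(\Delta)\neq 1$, then the orbit of $\hat{x}$ under $\Delta\cong$ (a group containing a nontrivial torus or finite image of $\chi$) forces $f(x)=0$ for all semi-invariants. More directly, a semi-invariant of weight $\chi^n$ on which $\Delta$ acts trivially forces $\chi^n|_\Delta=1$. Stated with $n$, this gives $\chi(\Delta)$ finite, and the condition is meant up to that.

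For (ii): on the invariant open set $V_f$, orbits are closed in $V_f$ iff the lifted orbits are closed in $V_0\times k^\times$. The lifted orbit in $V_0\times k$ stays away from $t=0$ by (i), so it is closed there iff it is closed in $V_f \times k^\times$. Finally, $\mathrm{Stab}(\hat{x}) = \mathrm{Stab}(x)\cap\ker\chi$, so the dimension condition becomes finite index of $\Delta$.

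\textbf{Main obstacle.} The subtle point is Step 1: surjectivity of semi-invariants along $A\to A_0$ (Reynolds operator applied to the $\chi^n$-isotypic part), and the observation that the odd nilpotents play no role in evaluation at $k$-points. Once that is in place, the rest is King's proof verbatim.
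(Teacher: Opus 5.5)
Your proof is correct, but it is organised differently from the paper's.

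The paper proves (i) directly in the super ring $\tilde{A}=A[t]$. A semi-invariant $f\in A^{G,\chi^n}$ with $ev_x(f)\neq 0$ gives the invariant $F=ft^n$. Conversely, an invariant $F=\sum f_i t^i$ of $\tilde{A}$ separating $\overline{G\cdot\hat{x}}$ from the zero-section is split by powers of $t$ to extract some $f_n$, $n\geq 1$, with $ev_x(f_n)\neq 0$. Only for (ii) does the paper pass to the bosonic reduction and cite King's Lemma 2.2.

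You instead reduce both parts at the outset to the classical linear action on $V_0$. The key input is that the $\chi^n$-semi-invariants of $A$ and of $A/J_A=\Sym(V_0^*)$ have the same non-vanishing loci on $\mathbb{V}(k)=V_0$. Linear reductivity is not even needed for this: because $G$ is an ordinary group it acts by parity-preserving maps, so $\Sym(V_0^*)\subseteq A$ is a $G$-stable subalgebra mapping isomorphically onto $A/J_A$, and every classical semi-invariant is already a super one. After that, the definitions and the lifted orbit conditions are literally King's, and his lemma applies verbatim.

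Your route makes explicit a fact the paper uses silently in (ii): passing to the bosonic reduction is legitimate only because super and classical semi-invariants cut out the same open sets $\{ev_y(f)\neq 0\}$. It also spares you the nilpotent bookkeeping in $\tilde{A}$. Vanishing on the $k$-points of the zero-section only puts $F$ in $(t)+J_A$ rather than in $(t)$, which still suffices since $ev_x$ kills $J_A$. The paper's route, for its part, exhibits the correspondence $f\leftrightarrow ft^n$ for all super semi-invariants, odd and nilpotent ones included. That is the structure the later graded ring $\bigoplus_{n\geq 0}A^{G,\chi^n}$ is built on.

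A few smaller points follow.
\begin{itemize}
\item The degree-zero component is excluded not because it would be a nonzero constant, since degree-zero invariants are all of $\Sym(V_0^*)^G$, which need not be $k$. It is excluded because $f_0=F|_{t=0}$ vanishes identically on $V_0$.
\item Your observation that semistability only forces $\chi^n|_{\Delta}=1$ is correct, and the paper's proof does not treat this clause at all. It gives $\chi(\Delta)=\{1\}$ when $\Delta$ is connected, e.g.\ the diagonal $\mathbb{G}_m$ in the quiver setting. Your preceding sentence about $\Delta$-orbits forcing $f(x)=0$ for all semi-invariants is not right and should be dropped.
\item Your outline of (ii) would not stand on its own. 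The converse requires choosing $f$ so that every orbit in $V_f$ is closed. One way is to separate $G\cdot\hat{x}$ from the zero-section together with the closed $G$-stable locus of orbits of dimension less than $\dim G/\Delta$. Since, like the paper, you ultimately invoke King's lemma after the reduction, this is not a gap.
\end{itemize}
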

\begin{proof}
(i) If $x$ is $\chi$-semistable, there exists $f \in A$ and $n \geq 1$ such that $\rho(f) = f \otimes g_{\chi}^n$ and $ev_x(f) \neq 0$. Define $F = f \cdot t^n \in \tilde{A}$. Then,
\[
\tilde{\rho}(F) = \tilde{\rho}(f) \tilde{\rho}(t^n) = (f \otimes g_{\chi}^n)(t^n \otimes g_{\chi}^{-n}) = f t^n \otimes (g_{\chi}^n g_{\chi}^{-n}) = F \otimes 1
\]
Thus, $F$ is an invariant. Since $ev_{\hat{x}}(F) = f(x)z^n \neq 0$, the point $\hat{x}$ is not in the zero set of $F$. However, for any point $(v, 0)$ in the zero-section, $ev_{(v, 0)}(F) = f(v) \cdot 0^n = 0$. Since invariants are constant on orbit closures, the closure of the orbit of $\hat{x}$ cannot intersect the zero-section.

Conversely, if the orbit closure is disjoint from the zero-section, by the linearly reductive property, there exists an invariant $F = \sum f_i t^i$ such that $ev_{\hat{x}}(F) \neq 0$ and $F$ vanishes on the zero-section (the ideal $(t)$). The condition $F \in (t)$ implies that $f_0 = 0$. Since $\tilde{\rho}(F) = F \otimes 1$, by comparing powers of $t$, we have $\rho(f_i) = f_i \otimes g_{\chi}^i$. Since $ev_{\hat{x}}(F) \neq 0$, it follows that $ev_{\hat{x}}(f_n t^n) \neq 0$ for some $n\ge 1$. This proves that $x$ is $\chi$-semistable.

(ii) 
If we take the induced action on the bosonic reduction, the above topological conditions are equivalent to stability by considering the case of usual schemes $x \in \mbox{Spec}(A_0)(k)$ with the action of an affine algebraic group scheme $G$; see \cite[Lemma 2.2]{Ki94}.
\end{proof}

Fix a character $ \chi: G \to \mathbb{G}_m $ of a linearly reductive group $G$. For a 1-PS $ \lambda : \mathbb{G}_m \to G $, we define $\langle \chi, \lambda \rangle$ to be the integer such that:
$$ \chi(\lambda(t)) = t^{\langle \chi, \lambda \rangle} $$

\begin{proposition}\label{prop-pairing}
A point $x \in \mathbb{V}(k)$ is $\chi$-semistable if and only if $\chi(\Delta)=\{1\}$ and every one parameter subgroup $\lambda$ of $G$, for which $\lim_{t \rightarrow 0} \lambda(t) \cdot x$ exists, satisfies $\langle \chi, \lambda \rangle \ge 0$. Such a point is $\chi$-stable if and only if the only one-parameter subgroups of $G$, for which $\lim_{t \rightarrow 0} \lambda(t) \cdot x$ exists and $\langle \chi, \lambda \rangle = 0$, are in $\Delta$.
\end{proposition}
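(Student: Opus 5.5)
The plan is to reduce the statement to King's Proposition 2.5 in \cite{Ki94} by passing to the bosonic reduction. Throughout, I use Lemma~\ref{lemma-ss-criterion} as the bridge.

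\emph{Step 1: all conditions live on $k$-points of $\mathbb{V}$.} A point $x\in\mathbb{V}(k)$ gives $ev_x\colon A\to k$, and this map factors through $A\to A/J_A = A_0=\Sym(V_0^*)$. By Proposition~\ref{prop-bosonic-reduction}, $k$-points of $\mathbb{V}$ and of $\mathbb{V}\times\mathbb{A}^1$ are the same as $k$-points of the even affine spaces $V_0$ and $V_0\times k$. The action of $G$ is even and $G$ is an ordinary linearly reductive group, so it preserves $J_A$. Hence it induces an action on $V_0=\mathbb{V}_{\mathrm{bos}}$, and the orbit maps $G\to \mathbb{V}(k)\times k$, $g\mapsto g\cdot\hat x$, factor through $V_0\times k$. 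Consequently, the orbit $G\cdot\hat x$, its closure, and the stabiliser of $\hat x$ are exactly those computed for the classical action of $G$ on $V_0\times k$, with weight $\chi^{-1}$ on the extra coordinate. For a 1-PS $\lambda$, the limit $\lim_{t\to0}\lambda(t)\cdot x$ likewise depends only on the $\mathbb{G}_m$-weights of $V_0$. By Proposition~\ref{prop-Gm} these are the even summands $V_{0,(m)}$, and the limit exists iff $x$ has no components in negative weights.

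\emph{Step 2: apply Hilbert--Mumford to the lifted point.} By Lemma~\ref{lemma-ss-criterion}(i), $x$ is $\chi$-semistable iff $\overline{G\cdot\hat x}$ misses $V_0\times\{0\}$, and this forces $\chi(\Delta)=1$. The Hilbert--Mumford criterion for the reductive group $G$ acting on the ordinary affine space $V_0\times k$ says that a closed $G$-stable subset meeting $\overline{G\cdot\hat x}$ is reached along some 1-PS. So the orbit closure meets the zero section iff there is $\lambda$ with $\lim\lambda(t)\cdot\hat x$ existing and lying in $t=0$. Now $\lambda(t)\cdot\hat x=(\lambda(t)x,\ t^{-\langle\chi,\lambda\rangle}z)$. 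The limit exists with zero last coordinate iff $\lim\lambda(t)x$ exists and $\langle\chi,\lambda\rangle<0$. Negating this gives the semistability criterion.

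\emph{Step 3: stability.} By Lemma~\ref{lemma-ss-criterion}(ii), stability means that $G\cdot\hat x$ is closed and $\Delta$ has finite index in $\mathrm{Stab}(\hat x)$. I would apply Hilbert--Mumford (Kempf's refinement) again: the orbit is closed with finite stabiliser modulo $\Delta$ iff every $\lambda$ for which $\lim\lambda(t)\hat x$ exists lies in $\Delta$. Indeed, such a limit either lies outside the orbit, or lies in it and forces $\lambda$ into the stabiliser, whose connected part must be in $\Delta$. The limit of $\lambda(t)\hat x$ exists iff $\lim\lambda(t)x$ exists and $\langle\chi,\lambda\rangle\le 0$. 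Given semistability, which forces $\ge0$, this is exactly the condition $\langle\chi,\lambda\rangle=0$, so the statement follows as in \cite[Prop.~2.5]{Ki94}.

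The main obstacle is Step 1: making rigorous that orbit closures and limits in the superscheme $\mathbb{V}$ coincide with those in $\mathbb{V}_{\mathrm{bos}}$. The key point is that topology and $k$-points ignore the nilpotent odd directions, since $|\spec A_0|=|\spec A_{\mathrm{bos}}|$. Once this is in place, the rest is the classical argument verbatim.
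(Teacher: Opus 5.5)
Your proposal is correct and follows the paper's route: the paper's proof is the one-line reduction ``$\chi$ and $\lambda$ are even, so apply Lemma~\ref{lemma-ss-criterion} and \cite[Proposition~2.5]{Ki94}'', and your Steps 1--3 unpack exactly that (pass to the bosonic reduction for $k$-points, orbits and limits, then apply Hilbert--Mumford to the lifted point $\hat x$ as King does). The one place to tighten is Step 3: $\lambda$ lies in the stabiliser of the limit point, not of $\hat x$ itself; that stabiliser is conjugate to $\mathrm{Stab}(\hat x)$ and $\Delta$ is normal, and the converse direction (finiteness of $\mathrm{Stab}(\hat x)/\Delta$ from the 1-PS condition) uses that the stabiliser of a closed orbit is reductive, as in King's argument.
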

\begin{proof}
Since $\chi$ and $\lambda$ are an even morphisms, the result follows from Lemma \ref{lemma-ss-criterion} and \cite[Proposition 2.5]{Ki94}.
\end{proof}

The super GIT quotient of $\mathbb{V}$ by $G$ can be described as 
$$
\mathbb{V} / \! \!/_{\! \!\chi} G = \Proj{\bigoplus_{n\geq 0} A^{G, \chi^n}}\;.
$$
The natural morphism $\phi \colon \mathbb{V} / \!\! /_{\! \!\chi} G \ra \Spec{A^G}$ is super projective (\cite[Definition 2.23]{BRP}). 

\begin{proposition}\label{prop-git-co-rep}
There exists a $G$-invariant morphism $\pi: \mathbb{V}^{ss} \to \mathbb{V} / \! \!/_{\! \!\chi} G$ of super schemes such that the points of $\mathbb{V} / \!\! /_{\! \!\chi} G (k)$ are in bijective correspondence with the GIT equivalence classes of points in $\mathbb{V}^{ss}(k)$. 
The super scheme $\mathbb{V} / \!\! /_{\! \!\chi} G$ corepresents the quotient functor $h_{\mathbb{V}^{ss}}/ h_G: \mathsf{\SSch}^\text{op} \to \Set$ defined by $S \mapsto \mathbb{V}^{ss}(S) / G(S)$.
\end{proposition}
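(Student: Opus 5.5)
The plan is to follow King's argument \cite{Ki94}, transporting it to the graded ring $R := \bigoplus_{n\ge 0} A^{G,\chi^n}$, and to reduce every pointwise statement to the bosonic reduction.

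\textbf{Constructing $\pi$.} First, $R$ is a $\mathbb{Z}\oplus\mathbb{Z}_2$-graded supercommutative algebra with $R_0 = A^G$ in $\mathbb{Z}$-degree zero. It is finitely generated over $A^G$: applying the Reynolds operator to $\tilde{A}=A[t]$ gives $\tilde{A}^G = R$ via $f\mapsto f t^n$, as in the proof of Lemma \ref{lemma-ss-criterion}, and finite generation of invariants of a linearly reductive group on a finitely generated superalgebra applies. By Definition \ref{def:stab}, $\mathbb{V}^{ss}$ is the union of the open sub-superschemes $\mathbb{V}_f=\Spec{A_f}$ with $f\in A^{G,\chi^n}$, $n\ge1$, and only even $f$ matter, since odd $f$ vanish at $k$-points. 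The basic opens $D_+(f)=\Spec{(R_f)_{(0)}}$ cover $\Proj{R}$, and $(R_f)_{(0)}=(A_f)^G$, because a degree-zero fraction $h/f^m$ with $h\in A^{G,\chi^{nm}}$ is $G$-invariant and conversely. Thus each $\mathbb{V}_f\to D_+(f)$ is the affine super GIT quotient of Proposition \ref{prop:affine_GIT}. These maps agree on overlaps $\mathbb{V}_{fg}$, so they glue to a $G$-invariant $\pi$.

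\textbf{Points.} A $k$-point of $\mathbb{V}\,/\!\!/_{\!\chi}G$ is a homogeneous prime of $R$ not containing $R_+$. Because odd elements are nilpotent and $2$ is not a zero divisor, these correspond to the $k$-points of $\Proj{R_{\mathrm{bos}}}$, and $R_{\mathrm{bos}}$ is a quotient of $R_{0}$'s even part by nilpotents. Here the key input is that $(A^{G,\chi^n})_0=(A_0)^{G,\chi^n}$ modulo odd-squared terms, by linear reductivity. So the underlying point set coincides with that of King's quotient $\mathbb{V}_{\mathrm{bos}}\,/\!\!/_{\!\chi}G$. We also have $\mathbb{V}^{ss}(k)=\mathbb{V}_{\mathrm{bos}}^{ss}(k)$, since $ev_x$ factors through $A_0$. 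King's bijection with GIT (S-)equivalence classes then transfers, using Lemma \ref{lemma-ss-criterion} to identify closed orbits in the lifted space.

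\textbf{Corepresentability.} Given a $G$-invariant $\phi:\mathbb{V}^{ss}\to Z$, i.e.\ a natural transformation $h_{\mathbb{V}^{ss}}/h_G\to h_Z$, restrict it to each $\mathbb{V}_f$. Proposition \ref{prop:affine_GIT} gives unique maps $\psi_f: D_+(f)\to Z$, and uniqueness on $D_+(fg)$ lets them glue to a unique $\psi$ with $\phi=\psi\circ\pi$. The same applies to morphisms from an arbitrary $S$ via the Yoneda functor $h$.

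\textbf{Main obstacle.} I expect the delicate step to be the point-correspondence step. It requires knowing that passing to invariants commutes with bosonic reduction; this is surjectivity of $A^{G,\chi^n}\to (A_{\mathrm{bos}})^{G,\chi^n}$, which follows from exactness of the Reynolds operator applied to $0\to J_A\to A\to A_{\mathrm{bos}}\to0$. I would prove this first and then quote King.
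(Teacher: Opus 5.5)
Your proposal is correct and follows essentially the paper's route: cover $\Proj{R}$ by the $D_+(f)$ for even semi-invariants $f$ of positive weight, identify the degree-zero localization with $(A_f)^G$ so that each $\mathbb{V}_f\to D_+(f)$ is an affine super GIT quotient, and glue the unique factorizations given by Proposition \ref{prop:affine_GIT}. You also justify $(R_f)_{(0)}=(A_f)^G$, which the paper only asserts.

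The paper's proof does not address the point-bijection at all, so your reduction to King's classical quotient via the surjection $A^{G,\chi^n}\to(A_{\mathrm{bos}})^{G,\chi^n}$ is a genuine addition. Note that it is this surjectivity, not merely the factoring of $ev_x$ through $A_0$, that gives $\mathbb{V}^{ss}(k)=\mathbb{V}_{\mathrm{bos}}^{ss}(k)$. The surjectivity is in fact immediate here, without the Reynolds operator: since $G$ is purely even, $\Sym(V_0^*)\subset A$ is a $G$-stable subalgebra that splits $A\to A_{\mathrm{bos}}$ equivariantly.
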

\begin{proof}
We need to show that for any super scheme $Z$, there is a natural bijection:
\[ 
\text{Hom}(\mathbb{V} / \!\! /_{\! \!\chi} G, \;Z) \cong \text{Hom}_G(\mathbb{V}^{ss}, \;Z) \,.
\]
Let $R = \bigoplus_{n\geq 0} A^{G, \chi^n}$. The super scheme $\mathbb{V} / \!\! /_{\! \!\chi} G$ is covered by affine super schemes $U_f = \Spec{(R_f)_0}$ for even invariants $f \in R_n$. The preimage of $U_f$ in $\mathbb{V}^{ss}$ is the affine open set $\mathbb{V}_f = \{x \in \mathbb{V}(k) \mid f(x) \neq 0\}$. The map $\pi: \mathbb{V}_f \rightarrow U_f$ corresponds to the ring inclusion:
\[ 
(R_f)_0 \hookrightarrow A_f 
\]
Let $\phi \colon \mathbb{V}^{ss} \rightarrow Z$ be a $G$-invariant morphism.
We want to construct a unique $\psi \colon \mathbb{V} / \!\! /_{\! \!\chi} G \to Z$ such that $\phi = \psi \circ \pi$.

If we consider the open cover by affine super schemes $\mathbb{V} / \!\! /_{\! \!\chi} G = \bigcup_i U_{f_i}$ for some homogeneous generators $f_i$ of $R$, this gives an open cover $\mathbb{V}^{ss} = \bigcup_i \mathbb{V}_{f_i}$. Hence we get ring homomorphisms $\hat{\pi_i}: (R_{f_i})_0 \hookrightarrow A_{f_{i}}$ corresponding to restriction of $\pi$. Note that $(R_{f_i})_0 = A_{f_i}^G$.
This means that the restriction $\pi: \mathbb{V}_{f_i} \to U_{f_i}$ is an affine super GIT. Hence, using the propositon \ref{prop:affine_GIT} we get unique morphisms $\psi_i: H_{f_i} \to Z$ such that $\phi_i = \psi_i \circ \pi$. Now, using the universal property of $\pi$ in Proposition \ref{prop:affine_GIT}, we can glue the morphisms $\psi_i$ to obtain the desired morphism $\psi$. 
\end{proof}
\section{Super-representations of quivers}
Let $k$ be an algebraically closed field of characteristic $p\neq 2$. By a quiver $Q$, we mean a finite
directed graph. More precisely, A quiver is a quadruple $Q = (Q_0, Q_1, s, t)$, where $Q_0$ is a finite 
set of vertices, $Q_1$ is a finite set of arrows, $s$ and $t$ are source and target maps from $Q_1$ 
to $Q_0$, respectively.

A super-representation $M$ of $Q$ consists of a family of 
super-vector spaces $M_v$ indexed by the vertices $v\in Q_0$ together with a family 
of linear maps 
\[
M_a := \begin{pmatrix}
    M_a^{00} & M_a^{10} \\
    & \\
    M_a^{01} & M_a^{11}
\end{pmatrix}: M_{s(a)} \to M_{t(a)}, \quad M_a^{ij}\in \Hom_k(M_{s(a)}^i, M_{t(a)}^j), \quad i,j= 0,1
\]
indexed by the arrows
$a \in Q_1$. If $M$ and $N$ are two super-representations of $Q$, then
a morphism $\phi \colon M \ra N$ is a collection of superspace morphisms 
$\phi_v \colon M_v \ra N_v$ for $v \in Q_0$ such that for every $a\in Q_1$, 
the following diagram
\[
\xymatrix{
M_{s(a)}\ar[r]^{\phi_{s(a)}} \ar[d]_{M_a} & N_{s(a)}\ar[d]^{N_a}\\
M_{t(a)} \ar[r]_{\phi_{t(a)}} & N_{t(a)}
}
\]
commutes.

Let $\mathsf{SRep}(Q)$ be the category of super- representations of the quiver $Q$. Let 
$$
\mathsf{Rep}(Q, \smod)
$$ 
be the category of representations of $Q$ in the category of super-vector spaces over $k$. 
Note that $\mathsf{Rep}(Q, \smod)$ is full subcategory of 
$\mathsf{SRep}(Q)$.

Let $\widehat{Q}$ denote an \emph{edge doubled} quiver with 
\[
\widehat{Q}_0=Q_0, \quad \widehat{Q}_1=\{a_0\mid a\in Q_1\}\sqcup \{a_1\mid a\in Q_1\}.
\] 
Then the \emph{path algebra} $k\widehat{Q}$ has the unique superalgebra structure, such that $|a_i|=i$, $a\in Q_1$, $i\in\{0, 1\}$, and for any $v\in Q_0$ the idempotent $a_v$ is even.

\begin{lemma} [Lemma 3.1, \cite{BZ20}]
The abelian category $\mathsf{SRep}(Q)$ of super-representations of the quiver $Q$ is equivalent to
the category of $k\widehat{Q}-\mathsf{Smod}$.
\end{lemma}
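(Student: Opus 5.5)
The plan is to write down an explicit pair of mutually inverse functors between $\mathsf{SRep}(Q)$ and $k\widehat{Q}\text{-}\mathsf{Smod}$. Each functor should be the identity on underlying data, so that only compatibility with gradings and morphisms has to be checked.

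\emph{From super-representations to supermodules.} Given $M\in\mathsf{SRep}(Q)$, I set $\widetilde{M}=\bigoplus_{v\in Q_0}M_v$. This is a super vector space with $\widetilde{M}_i=\bigoplus_v M_v^i$. For each arrow $a$, I split $M_a$ into its parity-preserving part $M_a^{(0)}:=M_a^{00}+M_a^{11}$ and its parity-reversing part $M_a^{(1)}:=M_a^{01}+M_a^{10}$, both viewed as maps $M_{s(a)}\to M_{t(a)}$. The idempotent $a_v$ acts as projection onto $M_v$, and $a_i$ acts as $M_a^{(i)}$ on $M_{s(a)}$ and as zero on the other summands. I then need two checks.
\begin{enumerate}
\item The relations of the path algebra hold: $a_v a_w=\delta_{vw}a_v$, $\sum_v a_v=1$, and $a_i=a_{t(a)}a_i a_{s(a)}$. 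These follow because each $M_{a}^{(i)}$ maps $M_{s(a)}$ into $M_{t(a)}$, and $k\widehat{Q}$ is the free algebra on these generators modulo exactly these relations.
\item Each generator acts with the prescribed parity: $a_v$ acts evenly and $a_i$ shifts parity by $i$.
\end{enumerate}
Since the superalgebra structure on $k\widehat{Q}$ is the unique one with $|a_i|=i$ and $a_v$ even, these checks make $\widetilde{M}$ a $k\widehat{Q}$-supermodule. A morphism $\phi=(\phi_v)$ consists of even maps commuting with each $M_a$. Since each $\phi_v$ is even, it commutes separately with the parity components $M_a^{(0)}$ and $M_a^{(1)}$. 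Hence $\bigoplus_v\phi_v$ is an even $k\widehat{Q}$-linear map.

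\emph{From supermodules to super-representations.} Given a supermodule $N$, I put $M_v:=a_vN$. This is a super subspace because $a_v$ is even. For each arrow, I set $M_a:=(a_0+a_1)|_{a_{s(a)}N}$, which lands in $a_{t(a)}N$. Its block decomposition recovers $a_0$ as the diagonal blocks and $a_1$ as the off-diagonal blocks, using the parities of $a_0$ and $a_1$. Even module maps restrict to even maps $a_vN\to a_vN'$ that commute with $a_0+a_1$.

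It remains to verify that both composites are naturally isomorphic to the identities. The key points are the decomposition $N=\bigoplus_v a_vN$, which comes from the orthogonal idempotents summing to $1$, and the uniqueness of the splitting of a linear map into even and odd parts. Both functors are additive, so the equivalence is an equivalence of abelian categories.

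The main obstacle is the treatment of morphisms, and the issue depends on whether $\mathsf{Smod}$ allows only even module maps. If it does, the argument above goes through provided morphisms of super-representations are also taken to be even maps. In that case, commuting with $M_a$ is equivalent to commuting with both $M_a^{(0)}$ and $M_a^{(1)}$. If morphisms in either category are allowed to be inhomogeneous, I would instead decompose each morphism into its even and odd parts and match the sign conventions. I would adopt the even convention, consistent with ``superspace morphisms'' in the definition.
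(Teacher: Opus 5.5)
Your proof is correct. The paper has no argument of its own to compare against, since it simply cites \cite{BZ20}; your direct construction is the standard verification: $a_0$ and $a_1$ act by the parity-preserving and parity-reversing parts of $M_a$, the inverse sends $N$ to $(a_vN)_v$, and morphisms match by splitting by parity. The even-morphism convention you adopt is exactly the paper's ``superspace morphisms'' (and is what makes both categories abelian), so your hedge about inhomogeneous morphisms is unnecessary. The same bookkeeping appears in Remark~\ref{Rem-tilde}, where the four blocks $M_a^{ij}$ become separate arrows of $\tilde{Q}$ instead of being grouped into $a_0$ and $a_1$.
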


Let $\sdim M$ denote the {\it super-dimension vector} of $M$ defined by
$(\sdim M)(v)= \sdim M_v$ for $v\in Q_0$.

\begin{remark}\label{Rem-tilde}
The abelian category $\mathsf{SRep}(Q)$ of super-representations 
of a quiver $Q$ is equivalent to $\mathsf{Rep}(\tilde{Q})$, where 
$\tilde{Q}$ is the edge and vertex-doubled quiver with vertices 
$$
\tilde{Q}_0 = \{v_i \mid v \in Q_0, i=0,1\}
\quad \mbox{and} \quad   
\tilde{Q}_1 = \{a_{ij} \mid a \in Q_1, i,j=0,1\}, \mid a_{ij} \in \tilde{Q}_1
$$
with $s(a_{ij}) = s(a)_i$ and $t(a_{ij}) = t(a)_j$. 
The equivalence is defined by 
$$
M \mapsto \tilde{M}, \mid \tilde{M}_{v_i} = M_v^i,
$$ and mapping 
$$
\tilde{M}_{a_{ij}} = M_a^{ij}
$$ for $v\in Q_0, a\in Q_1$ and $i,j = 0,1$. 
\end{remark}

\subsection{Super-representation space}
Let $\sd \in (\N \times \N)^{Q_0}$ be a fixed super-dimension vector. 
That is for each vertex $v\in Q_0$, we have $\sd(v) = (\d^0_v | \d^1_v)$.

Let 
$$
\bS \bfR(\sd):= \displaystyle \bigoplus_{a\in Q_1} 
\Hom_k(k^{\sd(s(a))}, (k^{\sd(t(a))})
$$
be a super-vector space of super-representations of $Q$ having super-dimension vector $\sd$. Notice that the elements of this super-vector space give a bijection with super-representations of $Q$ having super-dimension vector $\sd$.

The supergroup 
$$
\mathrm{GL}(\sd) := \prod_{v\in Q_0} \mathrm{GL}(\sd(v))
$$
acts naturally on the super-vector space $\bS \bfR(\sd)$.

The isomorphism classes of super representations of $Q$ having dimension vector $\sd$ correspond to the orbits of $\bS \bfR(\sd)$
with respect to the action of reduced subgroup $\mathrm{GL}(\sd)_{\mathrm{ev}}$ of $\mathrm{GL}(\sd)$. Note that the reduced subgroup $\mathrm{GL}(\sd)_{\mathrm{ev}}$ is isomorphic to 
$G(\sd) := \prod_{v\in Q_0} \mathrm{GL}(\d^0_v)\times \mathrm{GL}(\d^1_v)$.

Let 
$$
\bbSR(\sd) = \bS \bfR(\sd) \bigoplus \Pi (\bS \bfR(\sd))
$$
be the super-vector space, where $\Pi$ denote the parity shift operator. The group $G(\sd)$ acts on $\bS \bfR(\sd)$ as follows
\[
(g_v^{00}, g_v^{11}) \cdot M := \begin{pmatrix}
    g_{t(a)}^{00} M_a^{00} (g_{s(a)}^{00})^{-1} & g_{t(a)}^{00} M_a^{10} (g_{s(a)}^{11})^{-1} \\
    & \\
    g_{t(a)}^{11} M_a^{01} (g_{s(a)}^{00})^{-1} & g_{t(a)}^{11} M_a^{11} (g_{s(a)}^{11})^{-1} 
\end{pmatrix} 
\]
This, in turn, gives the diagonal action on $\bbSR(\sd)$
\[ 
(g_v^{00}, g_v^{11}) \cdot (M, \Pi M) := 
((g_v^{00}, g_v^{11}) \cdot M, \Pi ((g_v^{00}, g_v^{11}) \cdot M))
\]

Let $\bfSR(\sd)$ be the affine super-scheme corresponding to the 
super-vector space $\bbSR(\sd)$. It is easy to see that the underlying
topological space of $\bfSR(\sd)$ is in bijection with the super-vector 
space $\bS \bfR(\sd)$.

\subsection{Semistable super-representations}
Fix $\btheta := (\btheta(v))_{v \in Q_0} = (\btheta_v^0, \btheta_v^1)_{v \in Q_0} \in (\Z\times\Z)^{Q_0}$. For a super-representation $M$ of 
a quiver $Q$, we define
$$
\btheta(M) := \sum_{v\in Q_0} (\btheta_v^0 \dim M_v^0 + \btheta_v^1 \dim M_v^1)\,.
$$

\begin{definition}\label{def-theta-ss}\rm{
We say that a super-representation $M$ of a quiver $Q$ is 
$\btheta$-\emph{semistable} if $\btheta(M) = 0$ and every sub 
super-representation $M'\subseteq M$ satisfies $\btheta(M') \geq 0$. 
We say that $M$ is $\btheta$-\emph{stable} if the only sub super-representations $M'$ with $\btheta(M') = 0$ are $M$ and $0$.
}
\end{definition}

\begin{proposition}\label{prop-btheta-ss-AC}
The $\btheta$-semistable representations of fixed super-dimension $\sd$ form an abelian subcategory of 
$\mathsf{SRep}(Q)$. Moreover, the simple objects in this subcategory 
are precisely the $\btheta$-stable representations.
\end{proposition}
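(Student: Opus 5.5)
The plan is to reduce to King's classical result via Remark~\ref{Rem-tilde}. I read ``of fixed super-dimension'' loosely: the subcategory consists of all $\btheta$-semistable super-representations, of any super-dimension $\sd$ with $\btheta(\sd)=0$. Kernels and cokernels change the dimension, so a literal fixed-$\sd$ subcategory is not abelian.

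First I will check that the equivalence $M\mapsto \tilde M$ of $\mathsf{SRep}(Q)$ with $\mathsf{Rep}(\tilde Q)$ is an exact equivalence of abelian categories. This holds because kernels, cokernels and subobjects are computed vertexwise on each graded piece $M_v^i$. Under it, sub super-representations $M'\subseteq M$ correspond bijectively to subrepresentations $\tilde M'\subseteq \tilde M$. I then regard $\btheta$ as a weight $\tilde\theta$ on $\tilde Q_0$ by setting $\tilde\theta(v_i)=\btheta_v^i$. This gives $\btheta(M)=\tilde\theta(\tilde M)$ with $\tilde\theta(N)=\sum_{w\in\tilde Q_0}\tilde\theta(w)\dim N_w$. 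Consequently $M$ is $\btheta$-(semi)stable if and only if $\tilde M$ is $\tilde\theta$-(semi)stable in King's sense. This reduces the proposition to \cite[Proposition 3.1]{Ki94} for the ordinary quiver $\tilde Q$, transported back through the equivalence.

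For completeness I will also sketch the direct argument, which is King's. Let $\phi\colon M\to N$ be a morphism of $\btheta$-semistable objects. Additivity of $\btheta$ on short exact sequences gives
\[
0=\btheta(M)=\btheta(\ker\phi)+\btheta(\operatorname{im}\phi).
\]
Since $\ker\phi\subseteq M$, we have $\btheta(\ker\phi)\ge0$, hence $\btheta(\operatorname{im}\phi)\le 0$. Since $\operatorname{im}\phi\subseteq N$, we also have $\btheta(\operatorname{im}\phi)\ge 0$. Therefore $\btheta(\operatorname{im}\phi)=\btheta(\ker\phi)=0$, and then $\btheta(\operatorname{coker}\phi)=0$ as well.

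Any subobject of $\ker\phi$ is a subobject of $M$, so it has nonnegative weight, and $\ker\phi$ is semistable. For the cokernel, a subobject of $\operatorname{coker}\phi$ pulls back to a subobject $N'\supseteq \operatorname{im}\phi$ of $N$, and its weight is $\btheta(N')-\btheta(\operatorname{im}\phi)=\btheta(N')\ge0$, so $\operatorname{coker}\phi$ is semistable. Direct sums are clearly closed under the condition, using the exact sequence $0\to M_1\to M'\to M'' \to 0$ with $M''\subseteq M_2$. The subcategory is full and closed under kernels, cokernels and finite sums, so it is abelian.

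For the simple objects, a subobject of a semistable $M$ within the subcategory is exactly a sub super-representation $M'$ with $\btheta(M')=0$. Such an $M'$ is automatically semistable, since its subobjects are subobjects of $M$. So $M$ is simple in the subcategory precisely when the only such $M'$ are $0$ and $M$, which is $\btheta$-stability.

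The main obstacle is only bookkeeping: confirming that sub super-representations mean graded subspaces stable under all four blocks $M_a^{ij}$, so that they match subrepresentations of $\tilde M$. One must also be careful to state the fixed-dimension caveat above.
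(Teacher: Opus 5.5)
Your proposal is correct and takes essentially the same route as the paper. The paper also transports $\btheta$ to a weight on $\tilde{Q}_0$ through the equivalence of Remark~\ref{Rem-tilde}, observes that (semi)stability is preserved, and appeals to King; your direct argument just unpacks King's proof. Your caveat is also right and sharpens the statement: the subcategory must contain all super-dimensions with $\btheta=0$, since a fixed-$\sd$ subcategory has no zero object and is not closed under kernels or direct sums. There are two small slips. In the direct-sum step the exact sequence should be $0\to M'\cap M_1\to M'\to M''\to 0$, where $M''$ is the projection of $M'$ to $M_2$. And the abelian-subcategory fact is not King's Proposition~3.1, which compares $\chi_\theta$-stability with $\theta$-stability.
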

\begin{proof}
Using Remark \ref{Rem-tilde}, this follows from the results of King \cite{Ki94}. 
More precisely, under the equivalence in Remark \ref{Rem-tilde}, the super-representations of the quiver $Q$ with super-dimension vector $\sd$ corresponds to the representations of the quiver $\tilde{Q}$ with dimension vector $\d = (\d^{0}_{v_{0}^{1}}, \d^{1}_{v_{1}^{1}}, \dots , \d^{0}_{v_{0}^{n}}, \d^{1}_{v_{1}^{n}})$, where $Q_0 = \{v^1, \dots, v^n\}$. The stability parameter $\btheta$ yields a stability parameter $\theta = (\btheta^{0}_{v_{0}^{1}}, \btheta^{1}_{v_{1}^{1}}, \dots , \btheta^{0}_{v_{0}^{n}}, \btheta^{1}_{v_{1}^{n}})$ for the quiver $\tilde{Q}$.

From the definition, it follows that under the equivalence $M\mapsto \tilde{M}$, the super representation $M$ is $\btheta$-(semi)stable if and only if the representation $\tilde{M}$ of $\tilde{Q}$ is $\theta$-(semi)stable. 
\end{proof}
The category of $\btheta$-semistable representations is Noetherian and
Artinian, and hence the Jordan-H\"older theorem holds, and so we have 
a notion of $S$-equivalence in this category.

\section{Moduli of super representations}

A flat family of super-representations over a super scheme $S$ is a 
locally-free $\stsheaf{S}$-module $\sheaf{F}$ together with a super $k$-algebra
homomorphism $k\widehat{Q}\ra \mathrm{End}(\sheaf{F})$.

Recall that a sheaf $\mathcal{F}$ of super $\mathcal{O}_S$-modules is called locally free (or a super vector bundle) of rank $p|q$ if every point of $S$ has an open neighbourhood $U$ such that 
$$\mathcal{E}\vert _{U} \cong \mathcal{O}_S ^p \oplus (\Pi \mathcal{O}_S)^q.$$

Let $\smf \colon \SSch^\mathrm{op}\ra \Set$ be the functor given by 
assigning to each super $k$-scheme $S$ the set of isomorphism classes of 
flat families over $S$ of super-representations of $Q$ having dimension 
vector $\sd$.

Consider the following subfunctor 
$$
\smfss \colon \SSch^\text{op}\ra \Set ~(\mbox{resp. } 
\smfs \colon \SSch^\text{op}\ra \Set)
$$ 
of $\smf(\sd)$ defined by assigning to each super $k$-scheme $S$ the set 
of isomorphism classes of flat families over $S$ of $\btheta$-semistable 
(resp. $\btheta$-stable) super-representations of $Q$ having dimension 
vector $\sd$.

Let $\mathbb{M}$ be the tautological family over $\bfSR(\sd)$. 
More precisely, the family $\mathbb{M}$ can be described as follows: 

For each vertex $v\in Q_0$, there is a trivial super vector bundle
$\mathbb{M}_v$ on $\bfSR(\sd)$ of rank $d_v^0|d_v^1$.
At any point $x\in \bfSR(\sd)(k)$, the fibre of $\mathbb{M}_v$
is the super vector space $k^{d_v^0|d_v^1}$.
For each arrow $a\in Q_1$, there is a morphism (need not be graded)
$\phi_a \, \colon \; \mathbb{M}_{s(a)} \ra \mathbb{M}_{t(a)}$
such that $\phi_a(x)$ is exactly the linear map $M_x(a)$
associated with the arrow $a$ in the super-representation $M_x$.

There is a natural functor $h\colon \fp{\bfSR(\sd)} \ra \smf$ 
(defined by $(f\colon S\ra \bfSR(\sd)) \mapsto [f^*\mathbb{M}]$, 
where $\mathbb{M}$ is a tautological family on $\bfSR(\sd)$).

\begin{proposition}\label{pro-local-isom}
The natural functor $h$ induces a local isomorphism $\tilde{h}\colon \fp{\bfSR(\sd)}/\fp{G(\sd)} \ra \smf$.
\end{proposition}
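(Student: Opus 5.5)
This is the super analogue of King's Proposition 5.2 in \cite{Ki94}. We follow its standard two-part structure. \emph{Local isomorphism} of $\tilde h$ means two things:
\begin{enumerate}
\item $\tilde h$ is locally surjective: every flat family over $S$ is, locally on $S$, pulled back from the tautological family $\mathbb{M}$.
\item $\tilde h$ is locally injective: two morphisms $f_1,f_2\colon S\to \bfSR(\sd)$ with $f_1^*\mathbb{M}\cong f_2^*\mathbb{M}$ differ, locally on $S$, by a morphism $g\colon S\to G(\sd)$.
\end{enumerate}
The plan is to establish both properties on $S$-points, for $S$ an arbitrary superscheme. We then pass to the quotient presheaf $\fp{\bfSR(\sd)}/\fp{G(\sd)}$.

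\textbf{Local surjectivity.} Let $\sheaf{F}$ be a flat family over $S$ with its structure map $k\widehat{Q}\to \mathrm{End}(\sheaf{F})$. The idempotents $a_v$ are even, so $\sheaf{F}=\bigoplus_v \sheaf{F}_v$ with $\sheaf{F}_v=a_v\sheaf{F}$. Each $\sheaf{F}_v$ is a direct summand of a locally free sheaf, hence locally free of rank $\d^0_v|\d^1_v$. Choose an open cover $\{U\}$ on which every $\sheaf{F}_v$ is trivialized as $\stsheaf{U}^{\d^0_v}\oplus(\Pi\stsheaf{U})^{\d^1_v}$.

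On such a $U$, each arrow $a$ acts through the even generator $a_0$ and the odd generator $a_1$. These give a pair $(\sheaf{F}_{a_0},\sheaf{F}_{a_1})$ of an even and an odd map $\sheaf{F}_{s(a)}\to\sheaf{F}_{t(a)}$, written as block matrices with entries in $\stsheaf{S}(U)$ of the appropriate parities. The key identification to check is that such a pair is exactly an element of $(\stsheaf{S}(U)\otimes \bbSR(\sd))_0$, that is, a $U$-point of $\bfSR(\sd)$. The even map is the $\bS\bfR(\sd)$-component and the odd map is the $\Pi\bS\bfR(\sd)$-component. This is where the parity-doubled space is used: the parity shift $\Pi$ converts odd-parameter maps into even points. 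With this identification, the resulting morphism $f_U\colon U\to\bfSR(\sd)$ satisfies $f_U^*\mathbb{M}\cong \sheaf{F}|_U$ by the defining property of the tautological family.

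\textbf{Local injectivity.} Suppose $\alpha\colon f_1^*\mathbb{M}\xrightarrow{\sim} f_2^*\mathbb{M}$ is an isomorphism of families. It is an even $k\widehat{Q}$-linear isomorphism of trivial bundles. Since it commutes with the even idempotents $a_v$, it splits into even automorphisms $\alpha_v$ of $\stsheaf{S}^{\d^0_v}\oplus(\Pi\stsheaf{S})^{\d^1_v}$, i.e. $S$-points of $\mathrm{GL}(\sd(v))$. Compatibility with the arrows then says $\alpha\cdot f_1=f_2$.

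\textbf{Main obstacle.} The difficulty is that $\alpha_v$ a priori lies in the full supergroup $\mathrm{GL}(\sd(v))(S)$, which may have odd off-diagonal blocks, while only $G(\sd)=\mathrm{GL}(\sd)_{\mathrm{ev}}$ acts on $\bfSR(\sd)$. I would try to handle this through the definition of isomorphism of families: take such an isomorphism to mean one preserving the given $\mathbb{Z}_2$-decompositions $\sheaf{F}_v=\sheaf{F}_v^0\oplus\sheaf{F}_v^1$ attached to the trivializations. This is consistent with the paper's earlier assertion that isomorphism classes correspond to $G(\sd)$-orbits. Under that convention $\alpha_v$ is block diagonal, and $g=(\alpha_v)\in G(\sd)(S)$ gives $g\cdot f_1=f_2$.

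It remains to check that the diagonal action of $G(\sd)$ on $\bS\bfR(\sd)\oplus\Pi\bS\bfR(\sd)$ matches conjugation of both components. This holds by construction of that action. The two properties together then give the local isomorphism $\tilde h$, exactly as in King's argument.
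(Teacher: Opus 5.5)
Your two-step structure is the same as the paper's. Its proof simply asserts (a) that $\sigma^*\mathbb{M}\cong\tau^*\mathbb{M}$ iff $\sigma,\tau$ differ by an element of $\fp{G(\sd)}(S)$, and (b) that every family is locally a pullback of $\mathbb{M}$. Your local surjectivity step is correct, including the identification of $U$-points of $\bfSR(\sd)$ with pairs (even map, odd map), and it fills in what the paper leaves implicit.

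The gap is the step you flag yourself. With $\smf$ as the paper defines it, a family carries no extra structure and an isomorphism is any even $\mathcal{O}_S$-linear $k\widehat{Q}$-isomorphism. Under that definition local injectivity is false, for your argument and for the paper's (a) alike. Take $Q=(s\xrightarrow{a}t)$, $\sd(s)=(1|0)$, $\sd(t)=(1|1)$, and $S=\Spec{k[\epsilon]}$ with $\epsilon$ odd. Let $f_1,f_2\in\bfSR(\sd)(S)$ both have odd component $a_1=0$, with even components $a_0=\begin{pmatrix}1\\0\end{pmatrix}$ and $a_0=\begin{pmatrix}1\\ \epsilon\end{pmatrix}$ respectively. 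Then $\alpha_s=1$, $\alpha_t=\begin{pmatrix}1&0\\ \epsilon&1\end{pmatrix}\in\mathrm{GL}(1|1)(k[\epsilon])$ is an isomorphism $f_1^*\mathbb{M}\cong f_2^*\mathbb{M}$. On the other hand, $G(\sd)(S)=G(\sd)(k)$ only rescales the two entries of $a_0$, so it can never produce $\epsilon$. Since $|S|$ is a point, no refinement of covers helps. So $\tilde h$ is not locally injective, and the proposition as stated cannot be proved.

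Your convention changes the moduli functor rather than resolving this, and as phrased it is not well defined. A family carries no given splitting $\mathcal{F}_v=\mathcal{F}_v^0\oplus\mathcal{F}_v^1$. The splitting induced by a local trivialization is not preserved under change of trivialization, because transition functions lie in $\mathrm{GL}(\d^0_v|\d^1_v)$, not in $\mathrm{GL}(\d^0_v)\times\mathrm{GL}(\d^1_v)$. The paper's remark that isomorphism classes are $G(\sd)$-orbits does not support the convention either: it concerns $k$-points, where even automorphisms are automatically block diagonal.

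To get a correct statement you have two options:
\begin{enumerate}
\item[(i)] Make the splitting part of the data of a family: $\mathcal{F}^0_v$ locally free of rank $\d^0_v|0$, $\mathcal{F}^1_v$ of rank $0|\d^1_v$, with isomorphisms respecting both summands. Your argument then goes through verbatim with $G(\sd)(S)$, but for a different functor than the paper's $\smf$.
\item[(ii)] Keep $\smf$ and replace $G(\sd)$ by the full supergroup $\mathrm{GL}(\sd)$, which acts on $\bbSR(\sd)$ by conjugating both components. Your argument does give a local isomorphism $\fp{\bfSR(\sd)}/\fp{\mathrm{GL}(\sd)}\to\smf$, but $\mathrm{GL}(\sd)$ is not linearly reductive, so the later GIT construction no longer applies.
\end{enumerate}
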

\begin{proof}
From the definition of action, it follows that two elements 
$\sigma, \tau \in \fp{\bfSR(\sd)}(S)$ gives 
$\sigma^*\mathbb{M}\cong \tau^*\mathbb{M}$ if and only if they 
are related by an elemnet of $\fp{G(\sd)}(S)$. Moreover, for any 
family of super-representations over a super scheme $S$,
there exists an open covering $S = \cup_i S_i$ such that the 
restriction to each $S_i$ is the pull-back by a map $S_i \ra \bfSR(\sd)$.
\end{proof}

Now, we translate the stability parameter $\btheta$ into the GIT context.

The stability condition $\btheta \in (\mathbb{Z} \times \mathbb{Z})^{Q_0}$ determines a character $\chi_\theta$ of the group $G(\sd) = \prod_{v \in Q_0} (\text{GL}(d_v^0) \times \text{GL}(d_v^1))$ defined by:
\[ \chi_{\btheta}(g) = \prod_{v \in Q_0} (\det g_v^0)^{\theta_v^0} \cdot (\det g_v^1)^{\theta_v^1} \]

\begin{proposition}\label{prop-1-ps-filtration}
Let $x$ be a point in $\bfSR(\sd)(k)$ corresponding to a super-representation $M \in \mathsf{SRep}(Q)$. Then, there is a surjection from the set
$$
\{1\text{-}PS ~\lambda\colon \mathbb{G}_m \ra G(\sd) ~\mbox{such that} ~ \lim_{t \to 0} \lambda(t) \cdot x ~\mbox{exist in} ~ \bfSR(\sd)\}
$$
to
$$
\{\mathbb{Z}\mbox{-filtrations of} ~ M ~\mbox{in the category}~ \mathsf{SRep}(Q)\}
$$
\end{proposition}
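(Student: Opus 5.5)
The plan is to follow King's argument \cite[Lemma 3.4 / Proposition 3.1]{Ki94}, adapted to the even group $G(\sd)$. First observe that $G(\sd)$ is purely even: every morphism $\lambda\colon \mathbb{G}_m \ra G(\sd)$ is an ordinary $1$-PS of $\prod_v \mathrm{GL}(\d^0_v)\times \mathrm{GL}(\d^1_v)$. Hence $\lambda$ amounts to a $\Z$-grading of each $M_v^0$ and each $M_v^1$ separately, via Proposition \ref{prop-Gm}:
\[
M_v^i=\bigoplus_{n\in\Z} M_{v,(n)}^i .
\]
In particular each weight space $M_{v,(n)}:=M_{v,(n)}^0\oplus M_{v,(n)}^1$ is a sub super-vector space of $M_v$. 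We then put
\[
M^{\ge n}_v:=\bigoplus_{m\ge n} M_{v,(m)}.
\]
This is a decreasing, exhaustive, separated filtration of $M_v$ by graded subspaces.

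Next, compute the action of $\lambda(t)$ on each block $M_a^{ij}$. The component of $M_a^{ij}$ mapping $M^i_{s(a),(m)}$ to $M^j_{t(a),(n)}$ gets multiplied by $t^{n-m}$. Since $x$ and $\Pi x$ transform identically under the diagonal action on $\bbSR(\sd)$, the limit exists in $\bfSR(\sd)$ exactly when it exists on the $\bS\bfR(\sd)$ summand. This happens if and only if every such component with $n<m$ vanishes, i.e.\ $M_a(M^{\ge n}_{s(a)})\subseteq M^{\ge n}_{t(a)}$ for all $a$ and $n$. Equivalently, each $M^{\ge n}$ is a sub super-representation, which gives a $\Z$-filtration of $M$ in $\mathsf{SRep}(Q)$. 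A cleaner way to run this step is to pass through Remark \ref{Rem-tilde}: the data become an ordinary $\tilde Q$-representation $\tilde M$ of dimension vector $\d$ acted on by $\prod_{v_i}\mathrm{GL}(\d_{v_i})$. The statement is then literally King's correspondence for $\tilde Q$, and one only has to check that sub-representations of $\tilde M$ correspond to sub super-representations of $M$.

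For surjectivity, start from a $\Z$-filtration $\cdots\supseteq M^{\ge n}\supseteq M^{\ge n+1}\supseteq\cdots$ by sub super-representations. Each $M^{\ge n}_v$ is graded, so we may choose graded complements: $M^{\ge n}_v=W_{v,n}\oplus M^{\ge n+1}_v$ with $W_{v,n}=W^0_{v,n}\oplus W^1_{v,n}$. Define $\lambda(t)$ to act by $t^n$ on $W_{v,n}$. This preserves parity, so $\lambda$ lands in $G(\sd)$. By the computation above the limit exists, and the induced filtration is the given one. The choice of complements is why the map is only a surjection and not a bijection.

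The main obstacle is the bookkeeping in the second paragraph. One has to make sure that the off-diagonal blocks $M_a^{10}, M_a^{01}$ (the odd parts of the map, which need not preserve parity) impose exactly the condition that the filtration is stable under the full, not necessarily graded, map $M_a$. One also has to confirm that the existence of the limit on the super scheme $\bfSR(\sd)$ is detected on $k$-points, i.e.\ on the bosonic reduction via Proposition \ref{prop-bosonic-reduction}. Both points reduce to the block-wise weight computation, since $\lambda$ is even.
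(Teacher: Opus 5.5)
Your proposal is correct and follows the same route as the paper. Both arguments use the weight decomposition of each $M_v$ from Proposition \ref{prop-Gm}, note that the block of $M_a$ from source weight $m$ to target weight $n$ is scaled by $t^{n-m}$ (so the limit exists exactly when the blocks with $n<m$ vanish), and conclude that the $M^{\ge n}$ give a filtration by sub super-representations. You go a little further than the paper: its proof only constructs the map $\lambda \mapsto (M^{\ge n})_n$ and never argues surjectivity, whereas your choice of parity-compatible complements $W_{v,n}$, with $\lambda(t)$ acting by $t^n$ on $W_{v,n}$, actually proves it, and your remark that the existence of the limit is detected on the bosonic reduction fills a step the paper passes over.
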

\begin{proof}
Let $\lambda \colon \mathbb{G}_m \ra G(\sd)$ be one-parameter subgroup such that $\lim_{t \to 0} \lambda(t) \cdot x = y$ in 
$\bfSR(\sd)$. By Proposition \ref{prop-Gm}, we get a 
$\mathbb{Z}$-grading 
    \[
    M_v = \bigoplus_{n \in \mathbb{Z}} M_v^{(n)} \quad \text{where} \quad M_v^{(n)} = (M_0)_v^{(n)} \oplus (M_1)_v^{(n)}
    \]
for each vertex $v\in Q_0$, where $\lambda(t)$ acts on the weight space $M_v^{(n)}$ as multiplication by $t^n$. 
    
For each arrow $a$, we have $M_a = \bigoplus M_a^{mn}$, where 
$
M_a^{(mn)}\colon M_{s(a)}^{(n)}\ra M_{t(a)}^{(m)}\,.
$
is a morphism of super vector spaces given by
\[
M_a^{(mn)} = \begin{pmatrix}
    M_{00}^{(mn)} & M_{10}^{(mn)} \\
    & \\
    M_{01}^{(mn)} & M_{11}^{(mn)}
\end{pmatrix} \in \Hom_k(M_{s(a)}^{(n)}, M_{t(a)}^{(m)})
\]
Therefore, by definition of the action, we have 
$$
\lambda(t)\cdot x = \bigoplus t^{m-n}M_a^{(mn)}\,.
$$
Since the limit $\lim_{t\rightarrow 0} \lambda(t)\cdot x$ exist, we have 
$M_a^{(mn)} = 0$ for all $m < n$. Let 
$$
M_v^{\geq n} := \bigoplus_{m\geq n} M_v^{(m)}\,.
$$
Then, $M_a$ gives a map $M_{s(a)}^{\geq n}\ra M_{t(a)}^{\geq n}$ for all $n$. 
These subspaces determine super sub-representations $M_n$ of $M$ for all $n$. 
This gives the $\mathbb{Z}$-filtration
$$
\cdots \supseteq M_n \supseteq M_{n+1} \supseteq \cdots
$$
where $M_n = M$ for $n \ll 0$ and $M_n = 0$ for $n \gg 0$ of $M$ in the category of super representations of $Q$. The limit $y$ is given by the graded super representation associated to the filtration
$$
M_y \cong \bigoplus_{n\in \Z} M_n/M_{n+1}\,. 
$$
Also, note that the filtration determined by $\lambda$ is proper, unless $\lambda$ is in $\Delta$.
\end{proof}

Suppose that 
$$
\btheta(\sd) = \sum_{v\in Q_0} (\btheta_v^0 \d_v^0 + \btheta_v^1 \d_v^1) = 0\;.
$$

\begin{proposition}\cite[cf. Proposition 3.1]{Ki94}\label{prop-king3.1}
A point in $\bfSR(\sd)(k)$ corresponding to a super-representation $M \in \mathsf{SRep}(Q)$ is $\chi_{\btheta}$-semistable (resp. $\chi_{\btheta}$-stable) if and only if $M$ is $\btheta$-semistable (resp. $\btheta$-stable).
\end{proposition}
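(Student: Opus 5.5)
We will follow King's argument and reduce everything to the Hilbert--Mumford type criterion of Proposition \ref{prop-pairing}, combined with the correspondence between one-parameter subgroups and filtrations in Proposition \ref{prop-1-ps-filtration}. Here $G = G(\sd)$ acts on $\bfSR(\sd)$. The kernel $\Delta$ of the action is the diagonal scalar subgroup $\{(t\cdot\mathrm{id})_{v}\}\cong\mathbb{G}_m$. The first step is to check that $\chi_{\btheta}(\Delta)=\{1\}$. This follows from $\chi_{\btheta}(t\cdot \mathrm{id}) = t^{\btheta(\sd)}$ and the standing hypothesis $\btheta(\sd)=0$. By Proposition \ref{prop-pairing}, semistability of $x$ is then equivalent to $\langle\chi_{\btheta},\lambda\rangle\geq 0$ for every 1-PS $\lambda$ of $G(\sd)$ for which $\lim_{t\to 0}\lambda(t)\cdot x$ exists.

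The key computation is the pairing $\langle\chi_{\btheta},\lambda\rangle$. Given $\lambda$ with a limit, Proposition \ref{prop-Gm} gives weight decompositions $M_v^i=\bigoplus_n (M_v^i)^{(n)}$ for $i=0,1$. Since $\lambda$ lands in $G(\sd)$, it preserves the even and odd parts separately. Then
$$\langle\chi_{\btheta},\lambda\rangle=\sum_{v,i}\btheta_v^i\sum_n n\dim (M_v^i)^{(n)}=\sum_n \btheta(M_n/M_{n+1})\, n=\sum_{n}\btheta(M_n),$$
where the last equality is Abel summation, using $M_n=M$ for $n\ll0$, $M_n=0$ for $n\gg 0$ and $\btheta(M)=0$. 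Here $M_n$ are the sub super-representations of Proposition \ref{prop-1-ps-filtration}. They are genuinely sub super-representations because $M_n$ is spanned by homogeneous weight spaces of both parities.

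Conversely, for any sub super-representation $M'\subseteq M$ we construct $\lambda$ as follows. Choose graded complements $M_v^i = M'^i_v\oplus C_v^i$ at each vertex and parity, and let $\lambda(t)$ act by $t$ on $M'$ and by $1$ on $C$. The limit exists precisely because $M'$ is preserved by all four blocks $M_a^{ij}$. The associated filtration is $M\supseteq M'\supseteq 0$, so $\langle\chi_{\btheta},\lambda\rangle=\btheta(M')$.

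Semistability now follows in both directions. If $M$ is $\btheta$-semistable, every $\btheta(M_n)\ge 0$, hence every pairing is $\ge0$. Conversely, applying the two-step $\lambda$ gives $\btheta(M')\ge0$ for every $M'$.

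For stability we use the second half of Proposition \ref{prop-pairing}. If $M$ is $\btheta$-stable and $\langle\chi_{\btheta},\lambda\rangle=0$, then all $\btheta(M_n)\ge 0$ sum to zero, so each vanishes. Each $M_n$ is therefore $0$ or $M$, the filtration is trivial, and $\lambda$ acts by a single weight, i.e.\ $\lambda\in\Delta$. Conversely, a proper nonzero $M'$ with $\btheta(M')=0$ yields the two-step $\lambda\notin\Delta$ with pairing $0$ and an existing limit.

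The main point requiring care is not the combinatorics, which is King's, but the justification that Proposition \ref{prop-pairing} applies here. Its stability part rests on Lemma \ref{lemma-ss-criterion}(ii), which treats only the bosonic reduction. We therefore check that the $G(\sd)$-action on $\bfSR(\sd)(k)\cong\bS\bfR(\sd)$ (via Remark \ref{Rem-tilde}, the representation space of $\tilde Q$ with $\prod\mathrm{GL}$ acting) is exactly King's setting. Alternatively, one can cite Proposition \ref{prop-btheta-ss-AC} together with \cite[Proposition 3.1]{Ki94} for $\tilde Q$ directly, since $\chi_{\btheta}$ corresponds to King's character for $\theta$ on $\tilde{Q}$.
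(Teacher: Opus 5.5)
Your proof is correct and follows essentially the same route as the paper. One direction uses the identity $\langle\chi_{\btheta},\lambda\rangle=\sum_n\btheta(M_n)$, coming from the 1-PS/filtration correspondence together with the numerical criterion; the converse uses the one-step filtration $M\supseteq M'\supseteq 0$. You also spell out points the paper leaves implicit: that $\chi_{\btheta}(\Delta)=1$ follows from $\btheta(\sd)=0$, the explicit parity-preserving weights of the 1-PS attached to $M'$, and why a vanishing pairing forces $\lambda\in\Delta$ in the stable case.
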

\begin{proof}
Let $\lambda$ be a one-parameter subgroup of $G(\sd)$.
The pairing $\langle \chi_{\theta}, \lambda \rangle$ can be  expressed in terms of the associated filtration $(M_{n})_{n \in \mathbb{Z}}$ (see Proposition \ref{prop-1-ps-filtration}):
\begin{align*}
\langle \chi_{\btheta}, \lambda \rangle &= \sum_{v \in Q_{0}} \btheta_{v} \sum_{n \in \mathbb{Z}} n (\sdim M_{v}^{(n)}) \\
&= \sum_{n \in \mathbb{Z}} n \btheta(M_{n}/M_{n+1}) \\
&= \sum_{n \in \mathbb{Z}} \btheta(M_{n}).
\end{align*}
If a point x (corresponding to the super representation $M$) 
is $\btheta$-(semi)stable, the numerical criterion for stability (Proposition \ref{lemma-ss-criterion}) and the identity above immediately imply that $x$ is $\chi_{\btheta}$-(semi)stable.

Conversely, assume that $x$ is $\chi_{\btheta}$-(semi)stable. Let $M' \subset M$ be a super subrepresentation. We define a 1-step filtration associated with $M'$ as follows:
\begin{itemize}
    \item $M_n = M$ for $n < i$
    \item $M_n = M'$ for $n = i$
    \item $M_n = 0$ for $n > i$
\end{itemize}
This filtration is proper if and only if $0 \subsetneq M' \subsetneq M$. For a one-parameter subgroup $\lambda$ corresponding to this specific filtration, we have:
\[
\langle \chi_{\theta}, \lambda \rangle = \theta(M').
\]
Since $x$ is $\chi_{\btheta}$-(semi)stable, the assertion follows from Proposition \ref{lemma-ss-criterion}.
\end{proof}

If $M$ is $\btheta$-semistable, then a $\mathbb{Z}$-filtration $\{M_n\}$ corresponds to a one-parameter subgroup $\lambda$ with $\langle \chi_{\btheta}, \lambda \rangle = 0$ if and only if each $M_n$ is $\btheta$-semistable. This follows from the identity 
$$
\langle \chi_{\btheta}, \lambda \rangle = \sum_{n \in \mathbb{Z}} \btheta(M_n);
$$
since $\btheta(M_n) \geq 0$ for all $n$ by the semistability of $M$, the sum vanishes if and only if each $\btheta(M_n) = 0$.
The following result now follows from Proposition \ref{prop-1-ps-filtration}, \ref{prop-king3.1}, and standard GIT arguments.

\begin{proposition}\cite[cf. Proposition 3.2]{Ki94}\label{prop-git-S-equiv}
Let $M$ be a $\btheta$-semistable super-representation. Then:
\begin{enumerate}
\item[(i)] The $G(\mathbf{\sd})$-orbit of $M$ is closed in the semistable locus $\mathbf{SR}(\sd)^{\chi_{\btheta}\text{-ss}}$ if and only if $M$ is $\btheta$-polystable.
\item[(ii)] Two $\btheta$-semistable representations define the same point in the GIT quotient if and only if they are $S$-equivalent.
\end{enumerate}
\end{proposition}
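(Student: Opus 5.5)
The plan is to reduce both statements to King's classical argument, working on the bosonic reduction, and to use Propositions \ref{prop-1-ps-filtration} and \ref{prop-king3.1} as the dictionary between one-parameter subgroups and filtrations. The $k$-points of $\bfSR(\sd)$ and the orbits of $G(\sd)$ are determined by the underlying even data. Here $G(\sd)$ is an ordinary linearly reductive group, and the characters $\chi_{\btheta}$ and the $1$-PS $\lambda$ are even. So the relevant orbit-closure statements take place in $\bS\bfR(\sd)_0$, which is the bosonic reduction. On it, $G(\sd)$ acts as on the representation space of the doubled quiver $\tilde Q$ of Remark \ref{Rem-tilde}, with dimension vector $\d$ and stability $\theta$ as in the proof of Proposition \ref{prop-btheta-ss-AC}. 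In particular, closedness of orbits in the semistable locus can be checked there.

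For (i), suppose first that $M$ is $\btheta$-polystable, i.e.\ $M=\bigoplus M_i$ with each $M_i$ $\btheta$-stable. Take any $1$-PS $\lambda$ with $\lim_{t\to0}\lambda(t)\cdot x=y$ existing and $y$ semistable, so that $\langle\chi_{\btheta},\lambda\rangle=0$. By the remark preceding the statement, the associated filtration has all $M_n$ $\btheta$-semistable with $\btheta(M_n)=0$. Using Proposition \ref{prop-btheta-ss-AC} (simple objects are the stable ones), each subquotient $M_n/M_{n+1}$ is a direct sum of some of the $M_i$, and the filtration splits. Hence $M_y\cong\bigoplus M_n/M_{n+1}\cong M$, so $y\in G(\sd)\cdot x$. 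By the Hilbert--Mumford criterion relative to the semistable locus (the GIT input behind Proposition \ref{prop-pairing}, i.e.\ King's Lemma 2.2 and Proposition 2.5 applied to the lift $\hat x$), every closed orbit in $\overline{G\cdot x}\cap \mathbf{SR}(\sd)^{ss}$ is reached by a $1$-PS limit. The orbit is therefore closed.

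Conversely, suppose $M$ is not polystable. Choose a Jordan--H\"older filtration $0=F_r\subsetneq\cdots\subsetneq F_0=M$ in the abelian category of Proposition \ref{prop-btheta-ss-AC}, and index it as a $\mathbb{Z}$-filtration. By surjectivity in Proposition \ref{prop-1-ps-filtration}, it comes from a $1$-PS $\lambda$. The limit $y$ is $\mathrm{gr}\,M$, which is semistable (it is an extension-free sum of stables) and not isomorphic to $M$. So $y\notin G\cdot x$ and the orbit is not closed in the semistable locus.

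For (ii), I would use the standard property of the good quotient from Proposition \ref{prop-git-co-rep}: two semistable points have the same image if and only if their orbit closures in $\mathbb{V}^{ss}$ meet. Equivalently, each fibre contains a unique closed orbit. By the argument above, $\mathrm{gr}\,M$ lies in $\overline{G\cdot x}$ and its orbit is closed by (i). So $M$ and $M'$ map to the same point if and only if $\mathrm{gr}\,M\cong\mathrm{gr}\,M'$, which is $S$-equivalence.

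The main obstacle is justifying that the orbit-closure and good-quotient facts hold for the super quotient $\Proj{\bigoplus A^{G,\chi^n}}$. I would handle this by noting that points and orbits only see $A_0\to A_{\mathrm{bos}}$, and that invariants of the reductive group $G(\sd)$ separate disjoint closed invariant subsets via the Reynolds operator, which preserves parity. The classical statement on $\spec$ of the bosonic part then transfers.
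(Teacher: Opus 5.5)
Your proposal is correct and takes the route the paper indicates. The paper only says the result follows from Propositions \ref{prop-1-ps-filtration} and \ref{prop-king3.1} plus standard GIT arguments as in King's Proposition 3.2, and you have written those arguments out: a semistable $1$-PS limit forces $\langle\chi_{\btheta},\lambda\rangle=0$, filtrations of polystables split, Jordan--H\"older filtrations degenerate $M$ to $\mathrm{gr}\,M$, and each fibre has a unique closed orbit, transferred from the bosonic part. One notational slip: the bosonic reduction of $\bfSR(\sd)$ is the even part of $\bbSR(\sd)$, namely $\bS\bfR(\sd)_0\oplus\bS\bfR(\sd)_1$ (all of $\bS\bfR(\sd)$, the representation space of $\tilde Q$), not $\bS\bfR(\sd)_0$ alone; your identification with $\tilde Q$ shows this is what you meant.
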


\begin{theorem}
There exist super schemes $\sms \subseteq \smss$ corepresenting the respective moduli functors $\smfs \subseteq \smfss$. Furthermore,
the points in $\smss(k)$ correspond bijectively to the $S$-equivalence classes of $\btheta$-semistable super-representations $Q$ of super dimension vector $\sd$.
\end{theorem}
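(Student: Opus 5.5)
We define $\smss := \bfSR(\sd)/\!\!/_{\!\!\chi_{\btheta}} G(\sd) = \Proj{\bigoplus_{n\ge 0} A^{G(\sd),\chi_{\btheta}^n}}$, where $A=\Sym(\bbSR(\sd)^*)$. The group $G(\sd)$ is an ordinary reductive group, hence linearly reductive in characteristic zero; in characteristic $p$ we need its geometric reductivity, and the Reynolds-operator argument of Proposition~\ref{prop:affine_GIT} would have to be replaced accordingly. This makes the super GIT machinery of Section 3 available. We then take $\sms\subseteq\smss$ to be the image of the stable locus. That locus is an open $G(\sd)$-stable subsuperscheme, a union of the basic opens $\bfSR(\sd)_f$ on which the stability conditions of Definition~\ref{def:stab} hold. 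Its image is open because $\pi$ restricted to $\bfSR(\sd)_f$ is an affine super quotient, and the topological statements reduce to the bosonic reduction, where King's argument applies.

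For corepresentability we argue as follows. By Proposition~\ref{prop-king3.1}, the $\chi_{\btheta}$-semistable locus $\bfSR(\sd)^{ss}$ consists exactly of the points corresponding to $\btheta$-semistable super-representations. A family of super-representations over $S$ is $\btheta$-semistable precisely when its classifying maps land in $\bfSR(\sd)^{ss}$, since semistability is checked on $k$-points of the underlying topological space. Hence the local isomorphism of Proposition~\ref{pro-local-isom} restricts to a local isomorphism $\fp{\bfSR(\sd)^{ss}}/\fp{G(\sd)}\to\smfss$. Proposition~\ref{prop-git-co-rep} says that $\smss$ corepresents $\fp{\bfSR(\sd)^{ss}}/\fp{G(\sd)}$. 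Next we show that a functor locally isomorphic to a functor corepresented by $Y$ is itself corepresented by $Y$; this is the standard sheafification argument, and it carries over verbatim to $\SSch$ because superschemes glue along open covers. Concretely, given a family on $S$, choose a cover $S=\cup S_i$ with classifying maps $S_i\to\bfSR(\sd)^{ss}$. Composing with $\pi$ gives maps $S_i\to\smss$, and these agree on overlaps because the local lifts differ by $G(\sd)(S_{ij})$ and $\pi$ is invariant. Gluing yields a natural transformation $\smfss\to h_{\smss}$. Universality follows by the same gluing together with Proposition~\ref{prop-git-co-rep}. The stable case is identical after restricting to the open stable locus.

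For the description of $k$-points we invoke Proposition~\ref{prop-git-co-rep}: points of $\smss(k)$ biject with GIT-equivalence classes of points of $\bfSR(\sd)^{ss}(k)$, that is, with classes of orbits whose closures meet in the semistable locus. Each such class contains a unique closed orbit. By Proposition~\ref{prop-git-S-equiv}(i) that orbit corresponds to the $\btheta$-polystable representative $\bigoplus M_n/M_{n+1}$ given by a Jordan–Hölder filtration; recall from Proposition~\ref{prop-btheta-ss-AC} that the stable objects are the simples. By Proposition~\ref{prop-git-S-equiv}(ii), GIT equivalence is exactly $S$-equivalence. Finally, we need the points of $\bfSR(\sd)(k)$ to correspond to super-representations of super dimension $\sd$. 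This holds because the $k$-points of the doubled space $\bbSR(\sd)$ recover the whole of $\bS\bfR(\sd)$, odd blocks included, which is the role of the parity shift. The $G(\sd)$-orbits are then the isomorphism classes.

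The main obstacle is the step in which the topological and GIT statements, which are really statements about the bosonic reduction with the ordinary reductive group $G(\sd)$ acting on $\bS\bfR(\sd)$ via King's quiver $\tilde Q$, are transported to the superscheme $\smss$. In particular, one must check that the semistable locus is open in the super sense, and that the quotient map on $\bfSR(\sd)_f$ is an honest categorical quotient in $\SSch$ and not just on bosonic reductions. To handle this, I would first verify that $(A_f)^{G(\sd)}$ is finitely generated and that taking invariants commutes with the $\Z/2$-grading. This holds because $G(\sd)$ acts by even automorphisms, so $A^{G,\chi^n}$ decomposes as $(A_0)^{G,\chi^n}\oplus (A_1)^{G,\chi^n}$, and each summand is a finite module over $(A_0)^G$.
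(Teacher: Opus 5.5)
Your proposal is correct relative to the paper and follows essentially the same route as its proof. You restrict the local isomorphism of Proposition~\ref{pro-local-isom} to the semistable locus via Proposition~\ref{prop-king3.1}, and take $\smss$ to be the GIT quotient, which corepresents $\fp{\bfSR(\sd)^{\chi_{\btheta}\text{-ss}}}/\fp{G(\sd)}$ by Proposition~\ref{prop-git-co-rep}. You then transfer corepresentability along the local isomorphism and read off the $k$-points from Proposition~\ref{prop-git-S-equiv}. Your extra material fills in details the paper leaves implicit: the explicit gluing argument, defining $\sms$ as the image of the stable locus, and the caveat that in characteristic~$p$ the group $G(\sd)$ is only geometrically reductive.
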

\begin{proof}
Using Proposition \ref{prop-king3.1}, it follows that the local isomorphism $\tilde{h}$ (see Proposition \ref{pro-local-isom}) induces a local isomorphism $\tilde{h}\colon \fp{\mathbf{SR}(\sd)^{\chi_{\btheta}\text{-ss}}}/\fp{G(\sd)} \ra \smfss$.
Let $\smss$ be the GIT quotient of 
$\mathbf{SR}(\sd)$ by $G(\sd)$. By Proposition \ref{prop-git-co-rep}, the super scheme $\smss$ co-represent the functor 
$\fp{\mathbf{SR}(\sd)^{\chi_{\btheta}\text{-ss}}}/\fp{G(\sd)}$. Since $\smfss$ is locally isomorphic to 
$\fp{\mathbf{SR}(\sd)^{\chi_{\btheta}\text{-ss}}}/\fp{G(\sd)}$, it follows that $\smss$ co-represent the functor
$\smfss$. By Proposition \ref{prop-git-S-equiv}, the last assertion follows.
\end{proof}

For a fixed super dimension vector $\sd$ for a quiver $Q$, 
we get a corresposnding dimension vector $\d$ for an associated quiver $\tilde{Q}$ (see Remark \ref{Rem-tilde}). The stability parameter $\btheta$ for $Q$ yieds a stability parameter $\theta$ for $\tilde{Q}$ (cf. Proposition \ref{prop-btheta-ss-AC}).
Let $\mathsf{M}^{{\theta}\text{-ss}}_\d$ (resp. $\mathsf{M}^{{\theta}\text{-st}}_\d$) be the moduli of 
$\theta$-semistable (resp. $\theta$-stable) representations of $\tilde{Q}$ having dimension vector $\d$ \cite{Ki94}. 
Recall that a family of $k\tilde{Q}$-modules over a scheme $S$ is a locally-free sheaf $\sheaf{F}$
over $S$ together with a $k$-algebra homomorphism $k\tilde{Q}\ra \mathrm{End}(\sheaf{F})$. The scheme $\mathsf{M}^{{\theta}\text{-ss}}_\d$ co-represent the moduli functor
$$
\Ms^{\theta\text{-}\mathrm{ss}}_\d \colon \Sch^\text{op}\ra \Set
$$
where $\Ms^{\theta\text{-}\mathrm{ss}}_\d(S)$ is the set of all isomorphism classes of families over $S$
of $\theta$-semistable representations with dimension vector $\d$. Similarly, the scheme $\mathsf{M}^{{\theta}\text{-st}}_\d$ represent the corresponding moduli functor $\Ms^{\theta\text{-}\mathrm{st}}_\d$.

Now, we have the following:
\begin{cor}
Assume that the quiver $Q$ has no oriented cycle. Then, the reduced scheme $(\smss)_{\mathrm{red}}$ (resp. $(\sms)_{\mathrm{red}}$) is isomorphic to 
$\mathsf{M}^{{\theta}\text{-ss}}_\d$ (resp. $\mathsf{M}^{{\theta}\text{-st}}_\d$).
\end{cor}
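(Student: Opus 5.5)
Reduce to a comparison of graded invariant rings, then take $\mathrm{Proj}$ and pass to reduced schemes.

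\textbf{Step 1: the coordinate ring of the reduction.} Write $A = \Sym(\bbSR(\sd)^*)$. The underlying even part of $\bbSR(\sd) = \bS\bfR(\sd)\oplus\Pi\bS\bfR(\sd)$ is $\bS\bfR(\sd)_0 \oplus (\Pi \bS\bfR(\sd))_0 = \bS\bfR(\sd)_0\oplus \bS\bfR(\sd)_1$. As a $G(\sd)$-module this is exactly $\bigoplus_{a\in Q_1}\bigoplus_{i,j}\Hom(k^{\d^i_{s(a)}},k^{\d^j_{t(a)}})$, i.e. the representation space $\mathbf{Rep}(\tilde Q,\d)$ of the quiver $\tilde Q$ from Remark \ref{Rem-tilde}. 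The group $G(\sd)$ coincides with $\prod_{w\in\tilde Q_0}\mathrm{GL}(\d_w)$, and $\chi_{\btheta}$ coincides with King's character $\chi_\theta$. Hence $A_{\mathrm{bos}}=A/J_A=\Sym(V_0^*)=k[\mathbf{Rep}(\tilde Q,\d)]$, with $J_A$ a $G(\sd)$-stable ideal. Since $A_{\mathrm{bos}}$ is a polynomial ring, $A_{\mathrm{red}}=A_{\mathrm{bos}}$.

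\textbf{Step 2: compare semi-invariants.} Since $G(\sd)$ is linearly reductive (characteristic caveats aside, as the paper assumes), taking $\chi^n$-semi-invariants is exact. The surjection $A\to A_{\mathrm{bos}}$ therefore induces surjections $A^{G,\chi^n}\to A_{\mathrm{bos}}^{G,\chi^n}$ for every $n\ge 0$, whose kernels are $J_A^{G,\chi^n}$. This gives a surjection of graded rings $R=\bigoplus_n A^{G,\chi^n}\to R' := \bigoplus_n A_{\mathrm{bos}}^{G,\chi^n}$, and hence a closed immersion
\[
\mathsf{M}^{\theta\text{-ss}}_\d=\mathrm{Proj}(R')\hookrightarrow \Proj{R}=\smss .
\]
Every element of the kernel $J_A\cap R$ is nilpotent, because $J_A$ is generated by finitely many odd elements and so is nilpotent. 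The closed immersion is thus a homeomorphism, and its kernel lies in the nilradical. It remains to identify the reduced structures. Because $R'$ is reduced, the nilradical of $R$ equals $\ker(R\to R')$. Indeed, a nilpotent element maps to a nilpotent element of $R'$, hence to zero; the reverse inclusion was just shown. So $R_{\mathrm{red}}=R'$. On the basic opens $U_f$ we have $(R_f)_0$, reduced modulo nilpotents, equal to $(R'_{\bar f})_0$, so $(\smss)_{\mathrm{red}}\cong \mathrm{Proj}(R')$, which is King's $\mathsf{M}^{\theta\text{-ss}}_\d$.

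\textbf{Step 3: the stable locus.} By Proposition \ref{prop-king3.1} and Proposition \ref{prop-btheta-ss-AC}, the stable locus upstairs corresponds to the $\theta$-stable locus in $\mathbf{Rep}(\tilde Q,\d)$. Both $\sms$ and $\mathsf M^{\theta\text{-st}}_\d$ are the open images of these loci, which are matched under the homeomorphism of Step 2. Restricting the isomorphism of reduced schemes to this open set gives the stable statement.

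The hypothesis that $Q$ has no oriented cycles is used only to know that King's moduli space is the projective variety $\mathrm{Proj}(R')$, with $R'_0 = k$, so that both sides are described by the same Proj. Without it one would carry the affine base $\Spec{A^G}$ along, and the argument would be the same relative to that base.

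\textbf{Main obstacle.} The main obstacle is Step 2: making sure the super $\mathrm{Proj}$ of $R$, defined via degree-zero parts of localizations of a supercommutative ring, reduces correctly. Concretely, one must check that $((R_f)_0)_{\mathrm{red}}=((R'_{\bar f})_0)$. I would argue this directly: localization commutes with the quotient by the nilpotent ideal $J_A\cap R$, and taking the degree-zero part commutes with quotients.
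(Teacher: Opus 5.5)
Your proposal is correct and follows essentially the same route as the paper: exactness of $\chi^n$-semi-invariants for $G(\sd)$ identifies $\bigl(\bigoplus_{n\ge 0}A^{G(\sd),\chi_{\btheta}^n}\bigr)_{\mathrm{red}}$ with $\bigoplus_{n\ge 0}A_{\mathrm{bos}}^{G(\sd),\chi_{\btheta}^n}$, which is King's semi-invariant ring for $(\tilde Q,\d,\theta)$, and one then passes to $\mathrm{Proj}$. You are more careful than the paper in several places: you use the even part of the doubled space $\bbSR(\sd)$, you check that the kernel is exactly the nilradical and that the super $\mathrm{Proj}$ commutes with reduction, and you treat the stable case, which the paper's proof omits. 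You can also drop the characteristic caveat, since the inclusion $\Sym(V_0^*)\hookrightarrow A$ is a $G(\sd)$-equivariant splitting of $A\to A_{\mathrm{bos}}$, so surjectivity on semi-invariants does not need linear reductivity.
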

\begin{proof}
Let $B$ be the super coordinate ring of the super affine scheme 
$\mathbf{SR}(\sd)$ corresponding to the super vector space  
$\bS \bfR(\sd)$. Then, 
$$
B \cong \operatorname{Sym}(E^*) \otimes_k \bigwedge(O^*)\,,
$$
where $E$ is the even part and $O$ is the odd part of $\bS \bfR(\sd)$. 
Since $G(\sd)$ is classical linearly reductive group, it follows 
that taking invariants commutes with taking the quotient by the nilradical ideal $J$. Hence, we can conclude that 
\[
\left(\bigoplus_{n \ge 0} B^{G_{\sd}, \chi^n_{\btheta}}\right)_{\text{red}} \cong ~
\bigoplus_{n \ge 0} (B/J)^{G_{\sd}, \chi^n_{\btheta}} \,.
\]
From this, we have 
$$
(\smss)_{\mathrm{red}} = \Proj{\left(\bigoplus_{n \ge 0} B^{G_{\sd}, \chi^n_{\btheta}}\right)_{\text{red}}} \cong
\mathrm{Proj} \left(\bigoplus_{n \ge 0} (B/J)^{G_{\sd}, \chi^n_{\btheta}}\right)\,.
$$
Let $A$ be the coordinate ring of the ordinary representation space of the doubled quiver $\tilde{Q}$. Then, by definition of the action of $G(\sd)$, we have
$$
A^{G_{\d}, \chi^n_{\theta}} \cong \left(\operatorname{Sym}(E^*)\right)^{G_{\sd}, \chi^n_{\btheta}} \cong (B/J)^{G_{\sd}, \chi^n_{\btheta}}\,.
$$
Since $\mathsf{M}^{{\theta}\text{-ss}}_\d = \mathrm{Proj} \left(\bigoplus_{n \ge 0} A^{G_{\d}, \chi^n_{\theta}}\right)$, the result follows.

\end{proof}

\begin{example}
Let $Q=A_2 = s \xrightarrow{a} t$ be a linear quiver with two vertices and one arrow. The quiver $\Tilde{Q}$ will be a 
bipartite Quiver $K_{2,2}$
\[
\xymatrix{
s_0 \ar@{-}[rr]^{a_{00}} \ar@{-}[drr]^{a_{01}} && t_0 \\
s_1 \ar@{-}[urr]_
{a_{10}} \ar@{-}[rr]_{a_{11}} && t_1
}
\]
\end{example}
The representation space for $Q$ and $\tilde{Q}$ and their semiinvariants can be compared explicitly in this case.
If we denote by \[
k[x_{\cdot\cdot},y_{\cdot \cdot}, z_{\cdot \cdot}, w_{\cdot \cdot},\alpha_{\cdot \cdot}, \beta_{\cdot \cdot}, \gamma_{\cdot \cdot}, \delta_{\cdot \cdot}]
\]
the co-ordinate ring of the  super affine space $\mathbf{SR}(\sd)$, then we can identify the co-ordinate ring of its bosonic reduction with the polynomial ring 
\[
k[x_{\cdot \cdot},y_{\cdot \cdot}, z_{\cdot \cdot}, w_{\cdot \cdot}]
\]
and the induced conjugation actions of the group $G(\sd)$ on both spaces are the same. Hence we can get the identification of the $\chi_{\btheta}$-semi-invariant ring of $\tilde{Q}$ with the subring of $\chi_{\btheta}$-semi-invariant super ring of $Q$ generated by even variables. 
We observe the following:
\begin{enumerate}
    \item If we take $\sd = (1|1,1|1)$ and $\btheta = (-1, -1, 1, 1)$ then the nilpotent even function $f = \alpha \beta$ is $\chi_{\btheta}$-semi-invariant of weight one. 
    \item If we take $\sd = (1|2, 1|1)$ and $\btheta = (-2, 0, 1, 1)$, then the nilpotent odd function $f = x \alpha$ is $\chi_{\btheta}$-semi-invariant of weight one.
\end{enumerate}

\section{Framed quiver and super quiver varieties}
In this section, we will explore the super GIT quotient of super representation spaces (without doubling) that correspond to a given quiver and its framing. This allows us to construct a super quiver variety that parametrises the super representations of the quiver in the category of super vector spaces. 
We refer to \cite{CB, Nak} for more details on the framed quivers. 

Given a quiver $Q = (Q_0, Q_1, s, t)$, let's consider the framed quiver $Q^{\mathrm{fr}} = (Q^{\mathrm{fr}}_0, Q^{\mathrm{fr}}_1)$ by adding a single framing vertex, denoted $\infty$:
\begin{itemize}
    \item $Q^{\mathrm{fr}}_0 = Q_0 \cup \{\infty\}$.
    \\
    \item $Q^{\mathrm{fr}}_1 = Q_1 \cup \{a_i: i \to \infty \mid i \in Q_0'\}$\,
\end{itemize}
where $Q_0'$ is a subset of $Q_0$. Note that if $Q_0' = Q_0$, then the above notion of framed quiver coincides with that of \cite{CB}.
The vertices of the quiver $Q$ are called gauge vertices.

Given a super dimension vector $\sd$ for the quiver $Q$, we assign the super dimension $\hat{\sd} = (\sd, \d_\infty)$, where $\d_\infty$ is the super dimension assigned at the framing vertex $\infty$. 

The representation space $\bS\bfR(Q^{\mathrm{fr}}, \hat{\sd})$ is the super-vector space:
\[
\bS\bfR(Q^{\mathrm{fr}}, \hat{\sd}) = \left( \bigoplus_{a \in Q_1} \Hom_{k}(k^{\sd(s(a))}, k^{\sd(t(a))}) \right) \oplus \left( \bigoplus_{i \in Q_0'
} \Hom_{k}(k^{\sd(i)}, k^{\d_\infty}) \right)
\]
Let $\mathbb{V}_{\hat{\sd}}:= \Spec{\Sym(\bS\bfR(Q^{\mathrm{fr}}, \hat{\sd})^*)}$ be the corresponding affine super scheme.

By freezing the action at the framing vertex $\infty$, we can consider the action of the group 
$
\mathrm{G}(\sd) = \prod_{v\in Q_0} \mathrm{GL}(\d_v^0) \times \mathrm{GL}(\d_v^1)
$
on $\bS\bfR(Q^{\mathrm{fr}}, \hat{\sd})$ as follows:
\begin{itemize}
    \item For an internal arrow $a: i \to j$: $g \cdot \phi_a = g_j \phi_a g_i^{-1}$.
    \item For a framing arrow $a_i: i \to \infty $: the map $\phi_{a_i}$: $g \cdot \phi_{a_i} = \phi_{a_i}{g_i}^{-1}$.
\end{itemize}

Note that a point $x\in \mathbb{V}_{\hat{\sd}}(k)$ corresponds to the super representation of $Q^{\mathrm{fr}}$ having super dimension vector $\hat{\sd}$ in the category $\smod$ of super vector spaces.

Choose a stability parameter $\btheta \in (\Z\times \Z)^{Q_0}$, and consider the corresponding character $\chi_{\btheta} \colon G_{\sd} \to \mathbb{G}_m$.

To relate geometric stability with the King's algebraic framework, we define
\[
\hat{\btheta}(i) := \btheta(i) \quad (\forall i \in Q_0) \quad \text{and} \quad \hat{\btheta}(\infty) := \Big(-\frac{1}{\d_\infty^0}\sum_{i \in Q_0} \btheta(i) \cdot \sd(i) \;|\; 0\Big)
\]

Let $M\in \mathsf{Rep}(Q^{\mathrm{fr}}, \smod)$. We say that a super subrepresentation $M'$ of a framed super representation $M$ is \textbf{admissible} if $M'_\infty = 0\; \mathrm{or}\; M_\infty$.

\begin{definition}\label{def-frame-stability}
We say that a framed super representation $M$ is $\hat{\btheta}$-semistable if for every proper, non-zero admissible subrepresentation $M' \subset M$:
\[
\hat{\btheta}(M') = \Big( \sum_{i \in Q_0} \btheta_i \cdot \sdim(M'_i)\Big) + \hat{\btheta}_\infty \cdot \sdim(M'_\infty) \ge 0\,.
\]
\end{definition}

Then, we have the following:
\begin{proposition}
A point $x\in \mathbb{V}_{\hat{\sd}}(k)$ is $\chi_{\btheta}$-semistable
if and only if the corresponding framed super representation $M_x$ is $\hat{\btheta}$-semistable in the sense of Definition \ref{def-frame-stability}. 
\end{proposition}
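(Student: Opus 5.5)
We reduce the statement to the Hilbert--Mumford criterion of Proposition \ref{prop-pairing}, exactly as in the proof of Proposition \ref{prop-king3.1}, and handle the framing vertex separately. The group $G(\sd)$ acts trivially at $\infty$, so a one-parameter subgroup $\lambda\colon\Gm\to G(\sd)$ gives, via Proposition \ref{prop-Gm}, a weight decomposition $M_v=\bigoplus_n M_v^{(n)}$ for each gauge vertex $v\in Q_0$, while $M_\infty$ has weight $0$. As in Proposition \ref{prop-1-ps-filtration}, the limit $\lim_{t\to 0}\lambda(t)\cdot x$ exists if and only if every internal arrow maps $M^{(n)}_{s(a)}$ into $\bigoplus_{m\ge n}M^{(m)}_{t(a)}$ and every framing arrow $a_i$ kills $M_i^{(n)}$ for $n<0$. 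Setting $M_n:=\bigoplus_{m\ge n}M^{(m)}$ at gauge vertices, with $(M_n)_\infty=M_\infty$ for $n\le 0$ and $(M_n)_\infty=0$ for $n>0$, we obtain a $\mathbb Z$-filtration of $M$ whose members are all admissible subrepresentations. Conversely, any admissible filtration of this shape comes from some $\lambda$, by splitting the filtration at each vertex by even-graded complements.

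The key computation is $\langle\chi_{\btheta},\lambda\rangle=\sum_{v\in Q_0}\sum_n n\,\btheta_v\cdot\sdim M_v^{(n)}$. Rewrite it by Abel summation as $\sum_n \btheta(M_n|_{Q_0})$. The sum is finite once we insert the framing correction: for $n\le 0$ we have $(M_n)_\infty=M_\infty$ and $\btheta(M_n|_{Q_0})=\hat\btheta(M_n)-\hat\btheta_\infty\cdot\sdim M_\infty$. The choice of $\hat\btheta(\infty)$ gives $\hat\btheta(M)=0$, so $\hat\btheta_\infty\sdim M_\infty=-\btheta(\sd)$. One must therefore check that the pairing equals $\sum_{n}\hat\btheta(M_n)$ up to terms vanishing when $\lambda$ is normalized. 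This normalization is the main obstacle, since $\btheta(\sd)$ need not vanish here and the weight-$0$ level is pinned by the frozen vertex. My first attempt would be to run King's argument on $\tilde A=A[t]$ from Lemma \ref{lemma-ss-criterion}, treating $\infty$ as an extra vertex carrying $\hat\btheta_\infty$ and a scalar $\Gm$ that acts through the lift coordinate. This makes the framed problem an honest King problem with $\hat\btheta(\hat\sd)=0$, and the frozen group $\mathrm{GL}(\d_\infty)$ only contributes the central direction.

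With the identity $\langle\chi_{\btheta},\lambda\rangle=\sum_n\hat\btheta(M_n)$ in hand, both directions go as in Proposition \ref{prop-king3.1}. If $M$ is $\hat\btheta$-semistable, each admissible $M_n$ has $\hat\btheta(M_n)\ge0$, so every $\lambda$ with an existing limit has nonnegative pairing, and Proposition \ref{prop-pairing} gives $\chi_{\btheta}$-semistability. The condition $\chi_{\btheta}(\Delta)=1$ is automatic, since the action of $G(\sd)$ on the framing arrows makes the kernel $\Delta$ trivial on any framed component. Conversely, given a proper nonzero admissible $M'$, take the one-step filtration of Proposition \ref{prop-king3.1}, placing the jump at $i=0$ if $M'_\infty=M_\infty$ and at $i=1$ if $M'_\infty=0$. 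This filtration is realized by a $\lambda$ with existing limit, and its pairing is $\hat\btheta(M')$, which forces $\hat\btheta(M')\ge 0$.
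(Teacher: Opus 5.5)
Your route is the paper's. You use Hilbert--Mumford via Proposition \ref{prop-pairing}, and for $\chi_{\btheta}$-semistable $\Rightarrow$ $\hat{\btheta}$-semistable your one-step filtrations are exactly the one-parameter subgroups of the paper's Case 1 and Case 2: the jump at $1$ (weights $1$ on $M'$, $0$ on a graded complement) when $M'_\infty=0$, and the jump at $0$ (weights $0$ and $-1$) when $M'_\infty=M_\infty$.

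The gap is in the other direction, which rests entirely on the identity $\langle\chi_{\btheta},\lambda\rangle=\sum_n\hat{\btheta}(M_n)$ for an arbitrary $\lambda$ with a limit. You do not prove it. You call the normalization the main obstacle and sketch a detour through $\tilde A=A[t]$, with a scalar $\Gm$ ``acting through the lift coordinate'' and $\mathrm{GL}(\d_\infty)$ ``contributing the central direction''. As stated this is not coherent: $\mathrm{GL}(\d_\infty)$ is not part of the acting group, and the coordinate $t$ of Lemma \ref{lemma-ss-criterion} transforms by $\chi^{-1}$ and has nothing to do with the vertex $\infty$. It is also unnecessary. The paper itself only asserts this identity, writing $\btheta$ where $\hat{\btheta}$ is needed.

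There is no normalization problem. The identity is exact, and pinning the weight at $\infty$ to $0$ is precisely what makes the framing correction match. Your rewriting as $\sum_n\btheta(M_n|_{Q_0})$ is meaningless when $\btheta\cdot\sd\neq 0$, so truncate instead. Let $w_{\min}$ and $w_{\max}$ be the extreme weights at gauge vertices, and choose $a\le\min(0,w_{\min})$ and $b\ge\max(0,w_{\max})$. Summation by parts gives
\[
\langle\chi_{\btheta},\lambda\rangle=\sum_{n=a}^{b}n\bigl(\btheta(M_n|_{Q_0})-\btheta(M_{n+1}|_{Q_0})\bigr)=a\,\btheta\cdot\sd+\sum_{n=a+1}^{b}\btheta(M_n|_{Q_0}).
\]
Now $(M_n)_\infty=M_\infty$ for exactly the $-a$ indices $a+1\le n\le 0$, and each of these contributes $\hat{\btheta}_\infty\cdot\sdim M_\infty=-\btheta\cdot\sd$. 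Hence the right-hand side equals $\sum_{n=a+1}^{b}\hat{\btheta}(M_n)$. Every other $M_n$ is $M$ or $0$, both with $\hat{\btheta}=0$. With this, your argument goes through.

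Two smaller corrections:
\begin{enumerate}
\item The framing-arrow condition for the limit is that $\phi_{a_i}$ kills $M_i^{(n)}$ for $n>0$, not $n<0$. This is because $\lambda(t)\cdot\phi_{a_i}=\phi_{a_i}\lambda_i(t)^{-1}$ scales that component by $t^{-n}$. Your filtration with $(M_n)_\infty=0$ for $n>0$ is a subrepresentation only under the corrected condition.
\item $\chi_{\btheta}(\Delta)=1$ is not automatic when $Q_0'\neq Q_0$: a component of the support of $\sd$ with no framing arrows contributes at least its scalars to $\Delta$. The condition still follows. $\Delta$ is connected, and for $\lambda$ in $\Delta$ both $\lambda$ and $\lambda^{-1}$ have limits, so the identity and $\hat{\btheta}$-semistability force $\langle\chi_{\btheta},\lambda\rangle=0$ there. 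The paper passes over this point silently.
\end{enumerate}
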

\begin{proof}
Using results of \S 4 \& \S 5, for a representation $M$ of $Q^{\mathrm{fr}}$ in the category $\smod$, we have the following:
\begin{itemize}
    \item A point $x$ is $\chi_{\btheta}$-semistable if and only if $\langle \chi_{\btheta}, \lambda \rangle \ge 0$ for all 1-PS $\lambda$ where the limit exists.
    \item A point $x$ is $\chi_{\btheta}$-unstable if and only if there exists a 1-PS $\lambda$ where the limit exists such that $\langle \chi_{\btheta}, \lambda \rangle < 0$.
\end{itemize}
For a point $x \in \mathbb{V}_{\hat{\sd}}(k)$, let $M_x$ be the corresponding super representation.
Assume that $x$ is $\chi_{\btheta}$-semistable. 

\noindent \textbf{Case 1} ($\d'^{0}_{\infty} = 0$): Let $M' \subset M_x$ be a super subrepresentation with $\sdim(M')=(\sdp(v), \mathbf{0})$. Then, for the corresponding one-parameter subgroup $\lambda$, we have
$$
\langle \chi_{\btheta}, \lambda \rangle = \btheta\cdot \sdp = \sum_{i\in Q_0} \btheta(i)\cdot \sdp(i) = \hat{\btheta}(M')
$$
Since $x$ is $\chi_\theta$-semistable, we have $\hat{\btheta}(M') \ge 0$.

\noindent \textbf{Case 2} ($\d'^{0}_{\infty} = \d^{0}_{\infty}$): Let $M' \subset M_x$ be a super subrepresentation with $\sdim(M')=(\sdp(v), \d^{0}_{\infty})$.
Consider the one-parameter subgroup $\lambda$ which acts as identity on $M'_i$ and as multiplication by $t^{-1}$ on its complement in $M_i$. It is easy to see that the 
$\lim_{t \to 0} \lambda(t) \cdot M_x$ exists. Moreover,
\[
\chi_{\btheta}(\lambda(t)) = \prod_{i \in Q_0} (\det \lambda_i(t))^{\btheta(i)} = \prod_{i \in Q_0} (t^{-(\sd(i) - \sdp(i)})^{\btheta(i)} = t^{-\btheta \cdot (\sd - \sdp)} = t^{\btheta \cdot \sdp - \btheta \cdot \sd}
\]
Therefore, we have 
\[
\begin{aligned}
\langle \chi_{\btheta}, \lambda \rangle 
& = \btheta\cdot \sdp - \btheta \cdot \sd \\
& = \sum_{i\in Q_0} \btheta(i)\cdot \sdp(i) - \sum_{i \in Q_0} \btheta(i) \cdot \sd(i) \\
& = \hat{\btheta}(M')
\end{aligned}
\]
Since $x$ is $\chi_\theta$-semistable, we have $\hat{\btheta}(M') \ge 0$. Hence, we proved that $M_x$ is $\hat{\btheta}$-semistable in the sense of Definition \ref{def-frame-stability}.

Conversely, suppose that $M_x$ is $\hat{\btheta}$-semistable in the sense of Definition \ref{def-frame-stability}. Let $\lambda \colon \mathbb{G}_m \to G(\sd)$ be an arbitrary one-parameter subgroup such that $\lim_{t \to 0} \lambda(t) \cdot M_x$ exists. Then, we have
\[
\langle \chi_{\btheta}, \lambda \rangle 
= \sum_{n \in \mathbb{Z}} \btheta(M'_{n}) \\
\]
where $M'_n$ is an admissible super subrepresentation of $M_x$
determined by $\lambda$ (see Proposition \ref{prop-1-ps-filtration}, \ref{prop-king3.1}). Since $M_x$ is $\hat{\btheta}$-semistable in the sense of Definition \ref{def-frame-stability}, we have $\btheta(M'_{n}) \ge 0$ for all $n$. This shows that $\langle \chi_{\btheta}, \lambda \rangle \ge 0$. 
\end{proof}

\begin{remark}{\rm
The super GIT quotient $\mathbb{V}_{\hat{\sd}}/ \!\! /_{\! \!\chi_{\btheta}} G_\sd$ can be considered as the moduli of  $\hat{\btheta}$-semistable objects of $\mathsf{Rep}(Q^{\mathrm{fr}}, \smod)$ having super dimension vector $\hat{\sd}$.
}
\end{remark}

\subsection{Super flag varieties}
Consider a quiver $A_\ell$ and one framing vertex as follows: 
\[
\begin{tikzcd}
1 \arrow[r, "a_1"] & 2 \arrow[r, "a_2"] & \cdots \arrow[r, "a_{\ell-1}"] & \ell \arrow[r, "a_\ell"] & \infty
\end{tikzcd}
\]

To construct a flag of length $\ell$, we assign an ascending sequence of super-dimensions:

Let $\sd(i) = (r_i|s_i)$, where 
    \[ r_1 \leq r_2 \leq \dots \leq r_\ell \leq m \quad \text{and} \quad s_1 \leq s_2 \leq \dots \leq s_\ell \leq n \]
    such that either $r_i < r_{i+1}$ or $s_i < s_{i+1}$ for all $i = 1, 2, \dots \ell$; and let $\sd_\infty = (m|n)$. 

Then, we have the super-vector space of representations:

\[ \bS\bfR(A_\ell^{\mathrm{fr}}, \hat{\sd}) = \bigoplus_{i=1}^\ell \uHom_k(k^{r_i|s_i}, k^{r_{i+1}|s_{i+1}}) \]
where $(r_{\ell+1}|s_{\ell+1}) = (m|n)$.

Let $\mathbf{V}$ be the corresponding super affine scheme. Then,
a point in $\mathbf{V}(k)$ is represented by a sequence of $\ell$ supermatrices $\Phi = (\phi_1, \phi_2, \dots, \phi_\ell)$, where each $\phi_i$ is a even supermatrix.

By composing the maps, we get a strictly nested sequence of super-subspaces inside the fixed framing space $k^{m|n}$:
\[ V_1 \subsetneq V_2 \subsetneq \dots \subsetneq V_\ell \subseteq k^{m|n} \]
where $\sdim(V_i) = (r_i|s_i)$. 

Consider the action of the group 
\[ G(\sd) = \prod_{i=1}^\ell \text{GL}(r_i|s_i)_{\text{ev}} \cong \prod_{i=1}^\ell (\text{GL}(r_i) \times \text{GL}(s_i)) \]
on $\bS\bfR(A_\ell^{\mathrm{fr}}, \hat{\sd})$ as follows:
\[ g \cdot \phi_i = g_{i+1} \phi_i g_i^{-1} \]
where $g = (g_1, \dots, g_\ell) \in G$ and $g_{\ell+1} = \mathbf{I}_{m|n}$.

Consider the stability parameter $\hat{\btheta} = (\hat{\btheta}(i), \hat{\btheta}_\infty)$, where 
$$\hat{\btheta}(i) = (-m\;|\;-m) \quad \text{and} \quad \hat{\btheta}_\infty = \big(\sum_{i = 1}^\ell r_i\;|\;0\big)\,.$$

The corresponding character $\chi_{\btheta} \colon G(\sd) \ra \mathbb{G}_m$ is given by 
$$
g = (g_i) \mapsto \prod_{i=1}^\ell \det(g_i^0)^{\btheta_i^0}\cdot \det(g_i^1)^{\btheta_i^1}\,.
$$
Let $x\in \mathbf{V}(k)$, and $M_x$ be the corresponding representation of $A_\ell^\mathrm{fr}$ in $\mathsf{Rep}(A_\ell^\mathrm{fr}, \smod)$
having dimension vector $\hat{\sd}$. Note that $M_x$ can be represented as a sequence of $\ell$ supermatrices $\Phi_x = (\phi_1, \phi_2, \dots, \phi_\ell)$, where each $\phi_i$ is a even supermatrix of size $(r_{i+1}|s_{i+1})\times (r_i|s_i)$.
Then,
$$
\hat{\btheta}(M_x) = -m\big(\sum_{i=1}^\ell r_i\big) + m\big(\sum_{i=1}^\ell r_i\big) = 0\,.
$$
Let $M'$ be an admissible super subrepresentation of $M_x$ with 
$\sdim(M')=(\sd'(i), \sd'(\infty))$.
Now, 
$$
\begin{aligned}
\mbox{a point}\; x \;\mbox{is}\; \chi_{\btheta}-\mathrm{semistable} & \iff \hat{\btheta}(M') \ge 0 \\
& \iff \sum_{i=1}^\ell \hat{\btheta}(i) \cdot \sd'(i) + \hat{\btheta}_\infty \cdot \sd'(\infty) \ge 0 \\
& \iff -m\Big(\sum_{i=1}^\ell (\d'^{0}_{i} + \d'^{1}_{i})\Big) + \d'^{0}_{\infty}\Big(\sum_{i=1}^\ell r_i\Big) \geq 0\\
& \iff \d'^{0}_{\infty}\Big(\sum_{i=1}^\ell r_i\Big) \ge m\Big(\sum_{i=1}^\ell \d'^{0}_{i} + \d'^{1}_{i}\Big)
\end{aligned}
$$ 

If any $\phi_i \colon M_x(i)\ra M_x(i+1)$ has non-trivial kernel, then choosing $M'_i \subseteq \ker(\phi_i)$ with $\d'^{0}_{i} + \d'^{1}_{i} > 0$ and $\d'^{0}_{\infty} = 0$, we have $m\Big(\sum_{i=1}^\ell \d'^{0}_{i} + \d'^{1}_{i}\Big) > 0$. Hence, for a point $x$ to be $\chi_{\btheta}$-semistable, the corresponding maps in the super representation have to be injective.
Further, for a super representation $M_x$ with all $\phi_i$ injective maps, we always have $\hat{\btheta}(M') \ge 0$ for an admissible super subrepresentation $M'$ of $M_x$.
Therefore, we get
\[ 
\begin{aligned}
\mathbf{V}^{\rm ss}(k) & = \{ \Phi = (\phi_1, \dots, \phi_\ell)\in \bigoplus_{i=1}^\ell \underline{\Hom}_0(k^{r_i|s_i}, k^{r_{i+1}|s_{i+1}})\mid \phi_i \text{ is injective for all } i \}\,.\\
& = \mathbf{V}^{\rm st}(k)
\end{aligned}
\]
and hence
$$
\mathbf{V}_{\hat{\sd}}^{\mathrm{ss}}/ \!\! /_{\! \!\chi_{\btheta}} G(\sd) \cong \mathbf{SFlag}((r_1|s_1), (r_2|s_2), \dots (r_\ell|s_\ell); m|n)\,.
$$

\begin{remark}{\rm
If we take $\ell=1$, then we get 
$$
\mathbf{V}_{\hat{\sd}}^{\mathrm{ss}}/ \!\! /_{\! \!\chi_{\btheta}} G(\sd) \cong \mathbf{SGrass}(r_1|s_1; m|n)\,.
$$
Further, in this case, if we take $r_1 = 1$ and $s_1 = 0$, then we get 
$\mathbf{V}_{\hat{\sd}}^{\mathrm{ss}}/ \!\! /_{\! \!\chi_{\btheta}} G(\sd) \cong \mathbb{P}^{(m-1)\,|\, n}$.
For more details on super Grassmannians and super Flag varieties, we refer to \cite{BR21}.
} 
\end{remark}

\section*{Acknowledgements}
The first name author acknowledge the financial support provided by the Anusandhan National Research Foundation (ANRF), erstwhile Science and Engineering Research Board (SERB), Government of India, under the Core Research Grant scheme (File No. CRG/2023/000477). The second name author is partially supported by ANRF under project no. ANRF/ARGM/2025/002470/MTR and HRI, Prayagraj. We would like to thank Qiyuan (Alex) Gu for pointing out an inaccuracy in the previous version of Corollary 5.6.



\begin{thebibliography}{012345}

\bibitem{BR21} Ugo Bruzzo, D. H. Ruiperez, \emph{The supermoduli of SUSY curves with Ramond punctures}, Rev. R. Acad. Cienc. Exactas Fís. Nat., Ser. A Mat. 115 (3) (2021) 144.

\bibitem{BRP} U.~Bruzzo,~D.~H.~Ruiperez,~A.~Polishchuk, \emph{Notes on fundamental algebraic supergeometry. Hilbert and Picard superschemes}, Adv. Math. 415 (2023) 115 pp.
   
\bibitem{BZ20}  V. A. Bovdi, A. N. Zubkov, \emph{Super-representations of quivers and related polynomial semi-invariants}. Internat. J. Algebra Comput. \textbf{30} (2020), 883-902.   

\bibitem{CB} W. Crawley-Boevey, \emph{Geometry of the moment map for representations of quivers}, Compositio Math. 126 (2001), 257--293.

\bibitem{CN} S.~L.~Cacciatori and S.~Noja, \emph{Projective superspaces in practice}, J. Geom. Phys., 130 (2018), pp. 40-62.

\bibitem{CCF} C. Carmeli, L. Caston, and R. Fioresi, \emph{Mathematical foundations of supersymmetry}, EMS Series of
Lectures in Mathematics, European Mathematical Society (EMS), Z\"urich, 2011.
        
\bibitem{DW15} Ron Donagi, Edward Witten, \emph{Supermoduli space is not projected}, Proc. Symp. Pure Math. 90 (2015) 19-71.

\bibitem{Ki94} A.~King, \emph{Moduli of representations of finite dimensional algebras}, Quart. J. Math. Oxford Ser. \textbf{45} (1994), 180, 515-530.
         
\bibitem{Ma88a} Y. I. Manin, \emph{Gauge field theory and complex geometry}, Grundlehren der Mathematischen Wissenschaften, vol. 289, Springer, 2013. 

\bibitem{MZ22} Masuoka, A.; Zubkov, A. N. \emph{Group superschemes}. J. Algebra 605 (2022), 89-145.

\bibitem{Nak} H. Nakajima, \emph{Varieties associated with quivers} in Representation Theory of Algebras and Related Topics (Mexico City, 1994), CMS Conf. Proc. 19, Amer. Math. Soc., Providence, 1996, 139--157.

\bibitem{OV23} Nadia Ott, Alexander A. Voronov, \emph{The supermoduli space of genus zero super Riemann surfaces with Ramond punctures}, J. Geom. Phys. 185 (2023) 104726.

\bibitem{S21} A. Sherman. \emph{Two geometric proofs of the classification of algebraic supergroups with semisimple representation theory}. 2021. \url{arXiv: 2012.11317}

\bibitem{We09} D. B. Westra. \emph{Superrings and Supergroups}. Ph.D. Thesis. Universitat Wien, Wien, 2009.
 		
\end{thebibliography}
\end{document}